\documentclass[11pt]{amsart}
\usepackage[letterpaper,left=75pt,right=75pt]{geometry}
\allowdisplaybreaks[0]
\usepackage{float, comment}
\usepackage{graphicx}
\usepackage{amssymb}
\usepackage{color}
\usepackage{hyperref}

\numberwithin{equation}{section}

\newcommand{\bH}{\mathbb{H}}

\newcommand{\bR}{\mathbb{R}}

\newcommand{\cC}{\mathcal{C}}
\newcommand{\cP}{\mathcal{P}}
\newcommand{\cG}{\mathcal{G}}

\newcommand{\cN}{\mathcal{N}}
\newcommand{\SL}{\mathrm{SL}}

\newcommand{\cS}{\mathcal{S}}

\newcommand{\vol}{\mathrm{vol}}

\newcommand{\fL}
{\mathfrak{L}}
\newcommand{\fS}
{\mathfrak{S}}

\newcommand{\K}{\mathrm{Kiss}}
\newcommand{\Hull}{\mathrm{Hull}}

\newtheorem{Theorem}{Theorem}

\newtheorem{Prop}[Theorem]{Proposition}
\newtheorem{lemma}[Theorem]{Lemma}
\newtheorem*{lemma*}{Lemma}
\newtheorem*{theorem*}{Theorem}

\theoremstyle{definition} 

\newtheorem{remark}[Theorem]{Remark}

\newtheorem{Non-example}{Non-example}

\date{\today}
\title{Systolic pants decompositions with polynomial growth}
\author{Tarik Aougab, Jonah Gaster, Nicholas Miller}
\date{}

\begin{document}

\begin{abstract} 
We investigate improved bounds on the kissing number of a hyperbolic surface under the assumption that the systole set contains pants decompositions with volume growth constraints.
Namely, when the dual graph of the pants decomposition has polynomial volume growth, we obtain a power-savings improvement for the upper bound on the kissing number of $S$ in terms of its systolic length. 
We also construct super-exponentially many (in $g$) pants decompositions with exponential volume growth 
for which our techniques still imply stronger upper bounds for kissing numbers.
The proofs use the technology of ``stem systems'' and ``tulips'' developed by the first and second author to study $1$-systems of curves.  

\end{abstract}

\maketitle

\section{Introduction}

A \textit{systole} of a Riemannian manifold $(M,g)$ is an essential closed geodesic of minimal length, and the \textit{systolic length} $\mathrm{sys}(M)$ is the length of any such geodesic.
When $M=S_g$ is a closed hyperbolic surface, the study of systoles is a classical problem in hyperbolic geometry, related more broadly to spectral geometry.
For a fixed genus $g$, Mumford's compactness criterion \cite{Mumford} shows that $\mathrm{sys}$ attains a maximum on $\mathcal{M}_g$, the moduli space of hyperbolic structures, and it is natural to ask for this maximal value as well as for the surfaces that attain it.
As with many extremal problems on moduli space, determining this value explicitly is extremely difficult; in genus $2$, work of Jenni and Schmutz Schaller (\cite{Jenni}, \cite{Schmutz}) shows that the maximum is uniquely attained by the Bolza surface, while the corresponding problem remains open in every higher genus.

Rather than studying these exact values, it is perhaps easier to consider the asymptotic problem of determining the maximal systolic length.
Indeed, a simple argument using the area of a hyperbolic disk gives an asymptotic upper bound with leading term $2\log g$.
On the other hand, Buser and Sarnak \cite{BuserSarnak} constructed arithmetic families arising from principal congruence subgroups for which
\[
\mathrm{sys}(S_g)\ge \frac{4}{3}\log g-C,
\]
for some constant $C>0$ independent of $g$.
Katz, Schaps, and Vishne \cite{KSV} extended this construction to principal congruence towers associated to arbitrary closed arithmetic hyperbolic surfaces, while Makisumi \cite{Makisumi} showed that the coefficient $4/3$ is optimal for principal congruence towers associated to a fixed closed arithmetic hyperbolic surface.
Thus, at present,
\[
\frac{4}{3}
\le
\limsup_{g\to\infty}
\frac{\max_{S\in\mathcal{M}_g}\mathrm{sys}(S)}{\log g}
\le
2.
\]
A complementary extremal problem asks not how long the shortest closed geodesic can be but how many such geodesics a surface can possess.

If $S\in\mathcal{M}_g$ is a closed hyperbolic surface, the \textit{kissing number} $\K(S)$ is the cardinality of the set of systoles on $S$.
As with the corresponding problem for the systolic length, it is of particular interest to compute the maximal value of $\K(S)$ as $S$ varies over $\mathcal{M}_g$.
Unfortunately, doing this explicitly is again an extremely difficult problem.
For instance, in genus $2$, the kissing number is uniquely maximized by the Bolza surface, which has $12$ systoles \cite{MRT}.
Conjecturally, in genus $3$, the kissing number is maximized by the $M(3)$ surface initially studied by Schmutz Schaller \cite{Schmutz}. 
However, this is the extent of present knowledge of exact maximal values of this quantity, and we are again naturally led to study the asymptotic problem of determining the precise relationship between the maximal value of $\K(S)$ and the genus $g$.

For a genus $g$ surface, the fact that the maximal kissing number is at least linear in $g$ is straightforward; one can choose a metric on $S_g$ with an extremely short pants decomposition consisting of curves of equal length.
The collar lemma then implies that no other curve will be shorter.
The first result improving on this simple lower bound was given by Schmutz Schaller \cite{Schmutz3}.
To state this, here and throughout the rest of the paper, by a family $\cS$ of hyperbolic surfaces, we mean a collection $(S_{g_i})$ of surfaces of genus $g_i$, where $g_i\to\infty$.
In this language, Schmutz Schaller proved that, for any $\epsilon>0$, there is a family $\cS$ of hyperbolic surfaces such that $g^{4/3-\epsilon}\le \K(S)$ for each $S\in\cS$ of genus $g$.
In particular, one finds that
\[
\limsup_{g\to\infty}
\frac{\log\left(\max_{S\in\mathcal{M}_g}\K(S)\right)}{\log g}
\ge \frac{4}{3}.
\]
As is perhaps natural, these examples come from taking congruence towers of certain arithmetic hyperbolic surfaces.

On the other hand, a classical upper bound is due to Parlier \cite{parlier}.
For $S\in\mathcal{M}_g$, let $\ell=\mathrm{sys}(S)$ denote its systolic length.
Parlier establishes that
\begin{equation}\label{eqn:Parlierbound}
\K(S) \leq C  g  \frac{e^{\ell/2}}{\ell},
\end{equation}
for some universal constant $C>0$.
In particular, Parlier deduces that there exists a universal $D>0$ such that
\[
\K(S) \leq D \frac{g^2}{\log g},
\]
for every $S\in\mathcal{M}_g$. Petri and Bourque use Selberg trace formula techniques to generalize this upper bound bound to higher dimensional hyperbolic manifolds \cite{bourque}. 

Conjecturally, this upper bound is not sharp.
As some evidence for this, note that the constructions above have $\ell\approx \frac43\log g$, which is expected to be asymptotically maximal, and therefore Parlier's bound takes the form $O\left(g^{5/3}/\log g\right)$.
In fact, Schmutz Schaller conjectures that even this is not sharp and that the correct exponent should be $4/3$ (see \cite{Schmutz3}).

The focus of this paper is to investigate improvements to the upper bound for $\K(S)$ in Equation \eqref{eqn:Parlierbound} under additional assumptions on the systole set of $S$.
In particular, we consider families of surfaces with a systolic pants decomposition whose dual graphs possess polynomial volume growth (see below for the precise definition).
The motivation comes from the expectation that kissing numbers should be smaller on such surfaces, since the examples with the largest known kissing numbers are constructed from congruence covers of certain arithmetic hyperbolic orbifolds.
At least heuristically, the combinatorics of a short pants decomposition on one of these congruence covers should reflect the geometry of a Cayley graph for $\SL_{2}(\mathbb{Z}/n\mathbb{Z})$.
The corresponding Cayley graphs form an expander family as $n\to\infty$ and therefore have exponential volume growth on the relevant scales, in contrast with the polynomial volume growth assumed here.
Our main result is that, when $S$ admits such a polynomially growing pants decomposition, one can obtain power-saving improvements on Parlier's upper bound for $\K(S)$ in Equation \eqref{eqn:Parlierbound}.

We now define this precisely.
Given a non-decreasing function $f:[0,\infty)\rightarrow[0,\infty)$, we say that a graph $G$ has \emph{volume growth $f$} if, for each $r>0$ and each vertex $v\in G$, we have $\vol(B_r(v,G))\le f(r)$.
A graph $G$ has \textit{polynomial volume growth of degree $d$} if there is a monomial $f$ of degree $d$ such that $G$ has volume growth $f$.
This notion can just as easily be applied to a family of graphs $(G_n)_{n=1}^{\infty}$, which is said to have volume growth $f$ precisely when each graph $G_n$ does.
Finally, we say that a family $(G_n)_{n=1}^{\infty}$ has \emph{polynomial volume growth of degree $d$ at threshold $R$} provided there exists a monomial $f$ of degree $d$ such that, for every $n$ and every vertex $v\in G_n$, we have $\vol(B_R(v,G_n))\le f(R)$. 

\begin{Theorem} \label{thm:main}
Let $\cS$ be a family of hyperbolic surfaces such that each $S\in\cS$, with systolic length $\ell$, which admits a systolic pants decomposition whose dual cubic graph has polynomial volume growth of degree $d\ge 1$ at threshold $(\ell e^{\ell/2})^{1/(d+1)}$.
Then, for each $S\in\cS$ of genus $g$,
\[ 
\K(S) \le C  g \left(\ell  e^{\ell/2}\right)^{1-\frac{1}{d+1}}. 
\]
\end{Theorem}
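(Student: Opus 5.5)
Write $P=\{\alpha_1,\dots,\alpha_{3g-3}\}$ for the given systolic pants decomposition and $G$ for its dual cubic graph. Every systole of $S$ other than a curve of $P$ crosses at least one curve of $P$. The plan is to charge each systole to the pants curves it meets and to count, for each $\alpha\in P$, the systoles crossing $\alpha$. We then sum over $\alpha$, dividing by the number of pants curves each systole crosses, which is where the savings will come from.

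\textbf{Step 1: two competing bounds.} First, a systole $\gamma$ of length $\ell$ crossing $\alpha\in P$ meets each curve of $P$ at most once, since two systoles intersect at most once. Consecutive crossings with $P$ are separated by arcs inside pairs of pants whose boundary lengths are all $\ell$, and such arcs have length bounded below by a constant. So $\gamma$ stays in the pants corresponding to a ball of radius $O(\ell)$ in $G$ about the edge dual to $\alpha$. Second, Parlier's local counting (the argument behind \eqref{eqn:Parlierbound}) shows that the number of systoles crossing a fixed systole $\alpha$ is $O(e^{\ell/2}/\ell)$, or at least $O(e^{\ell/2})$ up to the normalization used there. With this normalization, the quantity $N=\ell e^{\ell/2}$ is the natural "total crossing budget" at a curve. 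This is the threshold scale $N^{1/(d+1)}$ in the hypothesis.

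\textbf{Step 2: dichotomy by combinatorial length.} Let $k(\gamma)$ be the number of curves of $P$ that $\gamma$ crosses, and fix the threshold $R=N^{1/(d+1)}$.

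\emph{Long systoles} are those with $k(\gamma)\ge R$. Double count the pairs $(\gamma,\alpha)$ with $\gamma$ crossing $\alpha$. The number of such pairs is at most $\sum_\alpha \#\{\gamma \text{ crossing } \alpha\}\lesssim g\,N$. Each long systole accounts for at least $R$ pairs, so there are at most $gN/R=g N^{1-1/(d+1)}$ long systoles.

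\emph{Short systoles} are those with $k(\gamma)<R$. Such a $\gamma$ crosses fewer than $R$ curves of $P$, each at most once, so its dual path in $G$ has length less than $R$. Hence $\gamma$ lies in the subsurface $\Sigma_v$ given by the union of pants in the ball $B_R(v,G)$ around any pants $v$ it visits. By the growth hypothesis at threshold $R$, $\Sigma_v$ has at most $f(R)\asymp R^d$ pants, so $|\chi(\Sigma_v)|\lesssim R^d$.

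\textbf{Step 3: counting short systoles.} Within $\Sigma_v$ we count systoles using the stem systems and tulips machinery, which bounds $1$-systems on a surface of complexity $\chi$ by a quantity linear in $|\chi|$ times a length factor. The aim is a bound of order $R^d$ for each base pants, or more precisely $N^{d/(d+1)}=N^{1-1/(d+1)}$ for each $v$. We then sum over the $2g-2$ pants $v$ and note that each short systole is counted at least once. Combined with Step 2, this gives $\K(S)\le C g N^{1-1/(d+1)}$.

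\textbf{Main obstacle.} The hard part is Step 3: showing that the number of systoles confined to a subsurface of $R^d$ pants is bounded by roughly $R^d$ per base pants, rather than by $R^d$ times Parlier's local factor $N$. The first approach I would try is to charge each short systole to a pants curve it crosses and apply tulip/stem counting. That counting shows that systoles passing through a fixed pants in a fixed combinatorial direction form a $1$-system whose size is controlled by the number of available complementary pants, which is at most $R^d$. The calibration of $R$ is exactly what balances $N/R$ against $R^d$.
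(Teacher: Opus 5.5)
Your architecture matches the paper's: split $\Gamma$ at combinatorial length $R=(\ell e^{\ell/2})^{1/(d+1)}$, bound the long systoles by double counting $i(\cP,\Gamma)$, show the short ones number $O(g\,V_R)$ with $V_R=\max_v\vol(B_R(v,G_\cP))$, and balance. The long-systole half is fine, with two corrections. First, the per-curve estimate is $i(\alpha,\Gamma)=O(\ell e^{\ell/2})$: cover $\alpha$ by $O(\ell e^{\ell/4})$ balls of radius $\Theta(e^{-\ell/4})$, each crossed by $O(e^{\ell/4})$ systoles. Your ``$O(e^{\ell/2}/\ell)$'' confuses this with Parlier's kissing bound, although the budget $N=\ell e^{\ell/2}$ you actually use is correct. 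Second, your Step 1 remark is false. An arc of a systole between two cuffs of a pair of pants with all cuffs of length $\ell$ has length bounded below only by the seam length, roughly $2e^{-\ell/4}$. So a systole can cross on the order of $\ell e^{\ell/4}$ pants curves, not $O(\ell)$. You never use this remark, so just drop it.

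The genuine gap is Step 3, which you flag as the main obstacle but do not carry out. Moreover, the localization you propose would not give the stated exponent. Counting the $1$-system confined to each $\Sigma_v$ with the stem-system machinery costs $O(|\chi(\Sigma_v)|^2)\sim R^{2d}$ per vertex, and balancing $N/R$ against $R^{2d}$ only yields $g\,N^{2d/(2d+1)}$. Your assertion that systoles through a fixed pants ``in a fixed combinatorial direction'' form a $1$-system of size $O(R^d)$ is not something the machinery provides.

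The missing idea is to localize only one of the two factors. This part of the count is purely topological: it uses only that $\fS$ is a $1$-system containing $\cP$ whose curves cross $\cP$ fewer than $R$ times. So the paper changes the metric to make the curves of $\cP$ extremely short. Then:
\begin{enumerate}
\item each curve of $\fS\setminus\cP$ is a main side of a tulip that is maximal amongst its (finitely many) refined types;
\item the map from the disjoint union of these tulips' inner flowers to $S$ is at most $C\sigma$-to-$1$, where $\sigma$ bounds the size of any associated stem system;
\item each inner flower has area close to $\pi$.
\end{enumerate}
Gauss--Bonnet on all of $S$ then gives $|\fS|\le|\cP|+C|\chi(S)|\,\sigma$. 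Only now does one localize. A stem has combinatorial length at most that of the main sides of its tulip, hence less than $R$. So each stem system lies over $B_R(v,G_\cP)$, and Proposition~\ref{prop:stem systems bound} gives $\sigma\le C\,V_R\le C'R^d$. The factor $g$ comes from a single global area count and the factor $R^d$ from the stem-system bound, not from a per-vertex $1$-system bound.
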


We remark that for certain values of $d$, the use of a threshold is critical in order for this bound to be non-trivial. 
For example, when $d<2$, if one removes the threshold and requires the stronger condition that the dual graph has subquadratic growth at all scales, then techniques of Parlier show that $\K(S)$ is necessarily $O(g)$ (see Section \ref{sub:thresh} for the argument).

The proof of Theorem \ref{thm:main} utilizes the technology of stem systems introduced by the first two authors to study $1$-systems of curves. In \cite{AougabGaster}, stem systems are used to produce a quadratic-in-$|\chi|$ upper bound on the maximum size of a collection of pairwise non-homotopic essential simple closed curves, such that any two intersect at most one time (i.e.~a \emph{$1$-system}). 
The connection to systole sets is that, when $S$ is closed, the systoles form a $1$-system. 
On the other hand, the proof of Theorem \ref{thm:main} also uses the systolic hypothesis in an essential way. In particular, this result is not simply a topological result about certain types of $1$-systems; it takes as additional input the geometric properties enjoyed by systoles, for example the systolic collar lemma, and the fact that systoles intersect in definitively large angles, as proved by Parlier \cite{parlier}. 

We note also that the assumption of admitting a systolic pants decomposition can be weakened; Theorem \ref{thm:main} applies as written whenever the surface admits a systolic multi-curve whose complementary components have uniformly bounded Euler characteristics. Going forward, we will call such a multi-curve \textit{large}. Even more generally, we can prove a general version of Theorem \ref{thm:main} which we now state.

\begin{Theorem}\label{thm:generalmain}
Let $\cS$ be a family of hyperbolic surfaces such that each $S\in\cS$, with systolic length
$\ell$, which admits a large systolic multi-curve $m$ such that the absolute value of the Euler characteristic of each complementary region is at most $T$ and whose dual graph has polynomial volume growth of degree $d$ at the 
threshold 
$\left(\ell e^{\ell/2}/T\right)^{1/(d+1)}$,  then
\begin{equation} \label{eq:gen thm}
 \K(S) \leq C  g T^{\frac1{d+1}} \left(\ell   e^{\ell/2}\right)^{1 - \frac{1}{d+1}}. 
 \end{equation}
 \end{Theorem}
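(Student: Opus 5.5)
We count systoles by splitting them according to how far they travel in the dual graph $G$ of $m$. Every systole not in $m$ crosses some curves of $m$. Since $m$ is systolic, the systolic collar lemma says each crossing of a component of $m$ costs length bounded below by a universal constant. Hence a systole $\gamma$ of length $\ell$ crosses $m$ at most $O(\ell)$ times. The sequence of complementary regions $\gamma$ visits is then a closed walk in $G$ of combinatorial length $k(\gamma)\le c\,\ell$.

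Fix a scale $R$, to be chosen, and call $\gamma$ \emph{short-range} if $k(\gamma)\le R$ and \emph{long-range} otherwise.

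\textbf{Long-range systoles.} We use a Parlier-type counting argument. Each long-range systole crosses at least $R$ arcs of $m$. By Parlier \cite{parlier}, systoles meet at angles bounded below, and the systolic collar around each component of $m$ has width roughly $e^{-\ell/2}$ scale in the appropriate sense. Consequently the number of distinct systoles crossing a fixed component of $m$ is $O(e^{\ell/2}/\ell)$ times a factor reflecting the $\ell$ length budget. This is exactly the mechanism behind \eqref{eqn:Parlierbound}, where summing over the $O(g)$ components of $m$ gives the total crossing count $\sum_\gamma k(\gamma)\lesssim g\,\ell e^{\ell/2}$. Since each long-range systole contributes at least $R$ to this sum, the number of long-range systoles is at most $C g\,\ell e^{\ell/2}/R$. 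One normalizes $\ell$ factors in whichever way matches the stem system bookkeeping.

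\textbf{Short-range systoles.} A short-range systole $\gamma$ lies in the subsurface $\Sigma_v$ formed by the regions in the ball $B_R(v,G)$ around any region $v$ it visits. By the polynomial growth hypothesis at threshold $R$, this ball has at most $f(R)\asymp R^d$ vertices, so $|\chi(\Sigma_v)|\le T R^d$. The systoles inside $\Sigma_v$ form a $1$-system, and here we invoke the stem system/tulip technology of \cite{AougabGaster} in its refined form: combined with the systolic geometry, it shows that the number of systoles contained in a subsurface of Euler characteristic $\chi$ is $O(|\chi|)$ times at most a bounded factor. Applying this to $\Sigma_v$ and summing over the $O(g/T)$ vertices $v$ bounds the short-range count by roughly $(g/T)\cdot T R^d\cdot(\text{polynomial in } T)$. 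Each systole is overcounted at most by the number of regions it visits, which is harmless.

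Balancing the two counts, $g\,\ell e^{\ell/2}/R \approx g T R^{d}$, gives $R=(\ell e^{\ell/2}/T)^{1/(d+1)}$, which is exactly the threshold in the hypothesis. Substituting yields \eqref{eq:gen thm}, and Theorem \ref{thm:main} is the case $T=1$ for pants.

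The main obstacle is the short-range estimate. The naive quadratic $1$-system bound from \cite{AougabGaster} applied to $\Sigma_v$ gives $(TR^d)^2$, which is too large. We therefore need a linear-in-$|\chi|$ bound for systoles confined to a subsurface whose boundary consists of systoles. The first thing to try is to run the stem system argument relative to $\partial\Sigma_v$: encode each systole by its stem, meaning its arc pattern across $m$, and use Parlier's angle lower bound to show that only boundedly many systoles share a stem. The number of stems is then linear in the number of regions and their adjacencies.
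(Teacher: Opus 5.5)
\paragraph{The gap: the short-range count.}
Your outer structure matches the paper's: split $\Gamma$ at combinatorial length $R$, bound the long-range systoles by $i(m,\Gamma)/R$, bound the short-range ones with stem systems and the growth hypothesis, then balance. The short-range step, however, has a genuine gap. It rests on the claim that the systoles contained in $\Sigma_v$ number $O(|\chi(\Sigma_v)|)$, and nothing supplies this. Taken at face value with $\Sigma_v=S$, it would already give $\K(S)=O(g)$ for every surface with a systolic pants decomposition, with no growth hypothesis at all. That is far more than the theorem asserts. Running the tulip machinery inside $\Sigma_v$ only yields $|\chi(\Sigma_v)|$ times the maximal stem-system size, which is again of order $|\chi(\Sigma_v)|$. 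That is the quadratic bound you yourself rejected.

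Your proposed fix also misidentifies stems. A stem is an arc from a point $p$ of an inner flower to the base of a maximal tulip, not the arc pattern of a systole across $m$. Counting systoles by arc patterns fails on both counts. First, many curves share a pattern: a single cycle of length $L$ in $G$ can carry on the order of $L$ curves of a $1$-system on the corresponding $S_{1,L}$. Parlier's angle bound, being only $\Omega(e^{-\ell/4})$, cannot make this number bounded. Second, there are many patterns: a ball of radius $r$ can contain exponentially many simple cycles through a vertex even when $G$ has linear growth.

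\paragraph{The missing idea.}
The stem-system bound controls a \emph{multiplicity}, not a count, and the area must be taken over all of $S$. For $\mathfrak{S}$ the paper uses only topology. It changes the metric so that $m$ is very short; then the inner flower $N^\Delta$ of each type-maximal tulip has area close to $\pi$, and every curve of $\mathfrak{S}\setminus m$ is a main side of such a tulip. It then applies Gauss--Bonnet on $S$ to the immersion $\bigsqcup N^\Delta\to S$, whose multiplicity at $p$ is at most a constant times the size of a $(p,m)$-stem system. A stem is combinatorially no longer than the main sides of its tulip, so every stem crosses at most $R$ curves of $m$. Proposition~\ref{prop:better bound} therefore bounds the multiplicity by $C\,\xi(B_R(v,G))\le C\,T\,V_R$, where $V_R=\max_v\vol(B_R(v,G))$. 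This gives $|\mathfrak{S}|\le |m|+C|\chi(S)|\,T\,V_R$ directly. The factor $g$ comes from the global area and only the multiplicity is local, so you need neither a per-ball linear bound nor an unspecified \emph{polynomial in $T$}.

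\paragraph{Two smaller corrections.}
First, crossing $m$ does not cost a universal amount of length. Cuffs of a pair of pants with all cuffs of length $\ell$ are only about $2e^{-\ell/4}$ apart along a seam. The a priori bound is therefore $k(\gamma)=O(\ell e^{\ell/4})$, not $O(\ell)$; with $O(\ell)$, your long-range class would be empty for large $\ell$ at your choice of $R$. Second, the long-range estimate should be justified concretely. Cover each $\alpha\in m$ by $O(\ell e^{\ell/4})$ balls of radius $\Theta(e^{-\ell/4})$. By Parlier's angle bound, only $O(e^{\ell/4})$ systoles cross $\alpha$ in each ball, so $i(\alpha,\Gamma)=O(\ell e^{\ell/2})$.
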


\begin{remark}
When the systole set contains a pants decomposition of linear growth, our bound is on the order of $g e^{\ell/4}$. Letting $X_{n}$ denote the cover of the modular orbifold associated to the principal congruence subgroup $\Gamma(n) < \mathrm{PSL}_{2}(\mathbb{Z})$, the kissing number is larger than $|\chi(X_{n})|^{4/3-\epsilon}$ for all $\epsilon >0$ and the systolic length $\ell_{n}$ is approximately $(4/3) \log(|\chi(X_{n}|)$. Thus the kissing number is larger than $|\chi| e^{\ell_{n}/4 - \epsilon}$, nearly matching this upper bound. 

Of course, $X_{n}$ does not admit a systolic pants decomposition of linear volume growth, so 
these estimates are not actually comparable. 
That is, though the linear case of our theorem seems to match the observed maximal growth of $\K(S)$, it is not possible to have large $\K(S)$ in this setting.
\end{remark}

Obviously the assumption of polynomial growth at only a specific threshold is weaker than assuming polynomial growth but it is natural to wonder whether there are actual examples where Therorem \ref{thm:main} applies but polynomial growth of the corresponding dual graphs fails dramatically. In fact, one can build families of surfaces admitting systolic pants decompositions that have quadratic growth at the appropriate threshold, and therefore Theorem \ref{thm:main} applies in one of its strongest forms, but which have exponential volume growth. One can also tweak the examples so that the volume growth is linear at the appropriate threshold but then Lemma~\ref{lem:GPTlemma} will show that the kissing number is at most linear.

\begin{Theorem} \label{thm:construction} Given $\ell: \mathbb{N} \rightarrow \mathbb{R}_{+}$, there is $\epsilon>0$ and a super-exponentially large (in $g$) collection of cubic graphs $\cG_{g}$ so that 
\begin{enumerate}
\item The volume growth function for any $G \in \cG_{g}$ is exponential with growth rate $\Omega(\ell^{-2}e^{-\ell/2})$;

\item For any hyperbolic surface $S$ admitting a systolic pants decomposition $\cP$ with dual graph in $\cG_{g}$ and systolic length $\ell(g)$ one has 
\[ \K(S) \leq g  e^{\ell/2  \left(1 - \epsilon \right)}~.\]
\end{enumerate}
\end{Theorem}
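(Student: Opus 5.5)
We will build $\cG_g$ as a hybrid: a base expander-like cubic graph whose vertices are blown up into long paths, or more symmetrically, graphs obtained from a cubic expander on $n$ vertices by replacing each edge with a path of length $L$. Concretely, take any cubic graph $H$ on roughly $g/L$ vertices with girth and expansion bounded below (random cubic graphs suffice), subdivide every edge into a path of $L$ edges, and then make the result cubic by attaching to each subdivision vertex a small pendant gadget. A single-vertex loop gadget is the natural choice, because it corresponds to a one-holed torus glued along a pants curve. The choice of $L$ will be $L \approx (\ell e^{\ell/2})^{1/3}$, so that at the threshold $R=(\ell e^{\ell/2})^{1/3}$ prescribed by Theorem \ref{thm:main} with $d=2$ (or the corresponding threshold for any fixed $d$), a ball of radius $R$ sees at most boundedly many branch vertices of $H$. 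Its volume is therefore $O(R)\le O(R^2)$, giving polynomial growth of degree $2$ at threshold.

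For item (1), the volume growth at large scales is governed by $H$. A ball of radius $r$ in the subdivided graph contains about $3\cdot 2^{r/L}\cdot L$ vertices, so the exponential growth rate is of order $1/L$. With $L$ chosen as above, $1/L = (\ell e^{\ell/2})^{-1/3}$. To match the stated rate $\Omega(\ell^{-2}e^{-\ell/2})$, we may instead take $L$ up to $\ell^2 e^{\ell/2}$. The rate is then at least that, and it remains exponential because $H$ is an expander. The requirement is only that the threshold condition still holds, so we will take the longest $L$ compatible with both items; any $L$ between the threshold and $\ell^2e^{\ell/2}$ gives exponential growth with rate $\Omega(\ell^{-2}e^{-\ell/2})$.

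For super-exponential count, note two sources of freedom. The number of labelled cubic graphs $H$ on $n\approx g/L$ vertices already grows like $n^{cn}$, which is super-exponential in $g$ when $L$ is subpolynomial in $g$. If $\ell\sim\log g$ is such that $g/L$ is too small, we instead vary the positions and types of the pendant gadgets along the paths. For example, we can choose for each subdivision vertex whether to attach a torus gadget or to route it into a small bounded-size graph. This gives $c^{g}$ choices, and further factorial choices of how gadgets are paired give super-exponential growth. Every choice preserves the local bound at threshold, which is all that matters for item (2).

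For item (2), we apply Theorem \ref{thm:main} (degree $d=2$ at threshold $(\ell e^{\ell/2})^{1/3}$), which yields $\K(S)\le C g(\ell e^{\ell/2})^{2/3}$. For $\ell$ large this is at most $g e^{\frac{\ell}{2}(1-\epsilon)}$ for any $\epsilon<1/3$, since the factor $C\ell^{2/3}$ is absorbed by $e^{\ell\epsilon'}$. For bounded $\ell$ we shrink $\epsilon$ and use Parlier's bound \eqref{eqn:Parlierbound}, noting $\K(S)\ge$ const forces nothing. Any residual constants are absorbed by choosing $\epsilon$ depending on the function $\ell$, which the statement allows.

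The main obstacle is the tension between items (1) and (2). Item (1) needs the exponential rate to be at least of order $\ell^{-2}e^{-\ell/2}$, so the paths cannot be too long. Item (2) needs the ball at the threshold radius to be polynomially small. We will check carefully that choosing $L$ between $(\ell e^{\ell/2})^{1/(d+1)}$ and $\ell^2e^{\ell/2}$ satisfies both requirements, and that the gadgets do not inflate ball volumes beyond a constant factor.
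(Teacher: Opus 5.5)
Your architecture is essentially the paper's: graphs that are polynomially small at the threshold $(\ell e^{\ell/2})^{1/3}$ but grow exponentially beyond it, followed by Theorem~\ref{thm:main} with $d=2$. Your threshold and growth-rate estimates for a subdivided cubic graph with $L\approx(\ell e^{\ell/2})^{1/3}$ are fine, and there is no real tension, since this $L$ already gives rate $\gg \ell^{-2}e^{-\ell/2}$. The super-exponential count, however, fails. There are only $\exp\bigl(O(\tfrac{g}{L}\log g)\bigr)$ cubic graphs $H$ on $n\approx g/L$ vertices. This is super-exponential in $g$ only when $L=o(\log g)$; subpolynomial is not enough. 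Since $L\ge(\ell e^{\ell/2})^{1/3}$, the argument breaks down once $\ell e^{\ell/2}\gg\log^3 g$, and for $\ell\asymp\log g$ the skeleton contributes only $\exp(o(g))$ graphs. Your fallback gives $c^g$ choices, which is merely exponential. The ``factorial choices of how gadgets are paired'' are not available either: pairing distant slots adds long-range edges, and a generic pairing of $\Theta(g)$ slots makes neighborhoods locally tree-like, so $\vol(B_R)$ becomes exponential in $R$ and the threshold hypothesis is lost. The multiplicity has to come from choices that are local (within distance $\lesssim R$) yet offer unboundedly many options per vertex. One example is rewiring inside windows of size $w\to\infty$ with $w\le R$, which costs at most a factor $w$ in $\vol(B_R)$; you would then still have to control isomorphisms. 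The paper takes its multiplicity from the ladder heights inside the gadgets $\mathcal{E}$ and from the matchings of the capped tree ends, not from the skeleton.

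Second, your pendant-loop gadget makes item (2) vacuous. Suppose a pants curve $\alpha$ is a loop at a pair of pants $P$ whose cuffs all have length $\ell$. The seam of $P$ joining the two cuffs glued to form $\alpha$ has length $s$ with $\cosh s=\cosh(\ell/2)/(\cosh(\ell/2)-1)$, so $s\approx 2e^{-\ell/4}$. Closing it up by an arc of $\alpha$ of length at most $\ell/2$ gives a curve meeting $\alpha$ once, of length at most $s+\ell/2<\ell$ for $\ell\ge 3$. So $\cP$ cannot be systolic. Any bounded-size piece $Y$ hung off a single pants curve $\delta$ fails in the same way. The maximal embedded half-collar of $\delta$ in $Y$ has width $w$ with $\ell\sinh w\le 2\pi|\chi(Y)|$. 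The orthogeodesic along which it meets itself, closed up along $\delta$, is an essential non-peripheral curve of length at most $2w+\ell/2<\ell$ for large $\ell$. So for your graphs no surface as in (2) exists, and they are not examples in the intended sense; the paper's ladders and glued triangles contain no loops or small pendant pieces. Finally, for bounded $\ell$ you cannot absorb constants into $\epsilon$. Taking $\epsilon>0$ only shrinks $g e^{\ell(1-\epsilon)/2}$, and since $\K(S)\ge 3g-3$, the inequality without a constant is simply false for small $\ell$. Item (2) must be read for large $\ell$ (or up to a multiplicative constant), and there your appeal to Theorem~\ref{thm:main} is correct.
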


Finally, for any $S$ admitting a systolic pants decomposition $\cP$ of any volume growth, we investigate the size of the set of systoles with small \textit{combinatorial length} -- the number of curves in $\cP$ a particular systole crosses. Of course, any such surface has the property that there are $3g-3$ systoles with combinatorial length $0$ -- these are the curves in $\cP$ themselves. We prove that these are essentially the only systoles with combinatorial length $\ll \log(g)$ in the following.

\begin{Theorem} \label{thm: small comb length} Let $f(g) = o(\log(g))$ and given $S$ with systole set $\Gamma$ containing a systolic pants decomposition $\cP$, let $\cC^{S, \cP}_{f}$ be 
\[
\cC^{S, \cP}_{f} = \left\{\, \gamma \in \Gamma \;|\; i (\gamma, \cP) \leq f(g) \,\right\}. 
\]
Then 
\[ | \cC^{S, \cP}_{f}| = o(g^{1+\epsilon}),\quad  \forall \epsilon >0.  \]

\end{Theorem}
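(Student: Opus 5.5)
The idea is to encode each systole of small combinatorial length by a short closed walk in the dual cubic graph of $\cP$. The number of such walks is only subexponential in $f(g)$. The number of systoles sharing a given walk is controlled by a purely topological bound on $1$-systems in a subsurface of bounded complexity.

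First, fix $\gamma\in\cC^{S,\cP}_f$ with $k=i(\gamma,\cP)\ge 1$; the case $k=0$ gives exactly the $3g-3$ curves of $\cP$. Since $\gamma$ and $\cP$ are geodesics, they are in minimal position. Hence $\gamma$ is cut by $\cP$ into $k$ arcs, each an essential arc in a pair of pants. The cyclic sequence of pants visited, together with the curves of $\cP$ crossed, gives a closed walk $w(\gamma)$ of length $k$ in the dual cubic graph $G$. (I ignore the basepoint and orientation, which costs at most a factor $2k$.) A cubic graph on $2g-2$ vertices has at most $(2g-2)\cdot 3\cdot 2^{k-1}$ closed walks of length $k$. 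So the number of possible walks with $k\le f(g)$ is at most
\[
\sum_{k\le f(g)} (2g-2)\,3\cdot 2^{k-1}\;\le\; 6g\,2^{f(g)} \;=\; g^{1+o(1)},
\]
because $f(g)=o(\log g)$.

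Second, fix a walk $w$ and let $Y_w\subset S$ be the union of the (at most $k$) pants visited by $w$, glued along the curves of $\cP$ that $w$ crosses. The other boundary curves become boundary of $Y_w$, and $|\chi(Y_w)|\le k$. Every $\gamma$ with $w(\gamma)=w$ is a simple closed curve contained in $Y_w$. It is essential in $Y_w$, and it is non-peripheral because it crosses some curve of $\cP$ interior to $Y_w$. Systoles pairwise intersect at most once, so these curves form a $1$-system on $Y_w$. I would then invoke a known polynomial bound for $1$-systems on a compact surface with boundary, say Przytycki's $O(|\chi|^3)$, or the quadratic bound from \cite{AougabGaster} if it applies to surfaces with boundary. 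This shows each walk accounts for at most $P(k)$ systoles, with $P$ a fixed polynomial.

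Combining the two steps gives $|\cC^{S,\cP}_f|\le 3g-3+ 6g\,2^{f(g)}\,2f(g)\,P(f(g))$. Since $2^{f(g)}\,f(g)\,P(f(g))=g^{o(1)}$, this is $o(g^{1+\epsilon})$ for every $\epsilon>0$.

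The main obstacle is the second step. One must check carefully that the crossing pattern really confines $\gamma$ to $Y_w$ as a non-peripheral curve. One also needs a $1$-system bound valid for surfaces with boundary, where some boundary components of $Y_w$ may be glued to each other in $S$; in that case I would pass to the immersed rather than embedded subsurface, or to the abstract surface built from the walk, and lift. If only an exponential-in-$|\chi|$ bound were available, the argument would still go through, since any bound of the form $e^{O(k)}$ is $g^{o(1)}$ for $k=o(\log g)$.
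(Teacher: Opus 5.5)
Your argument is correct, but it takes a different route from the paper. The paper gets Theorem~\ref{thm: small comb length} as a byproduct of the $|\mathfrak{S}|$ estimate in the proof of Theorem~\ref{thm:main}, with $R=f(g)$:
\begin{itemize}
\item the inner-flower area count gives $|\mathfrak{S}|\le |\cP|+C|\chi(S)|\,\sigma(\mathfrak{S})$;
\item Proposition~\ref{prop:stem systems bound} bounds $\sigma(\mathfrak{S})$ by $C\max_v\vol(B_R(v,G_\cP))$, which is at most $3C\cdot 2^{R}=g^{o(1)}$ because $G_\cP$ is cubic.
\end{itemize}
You instead count closed walks in $G_\cP$ and bound each fiber with an off-the-shelf $1$-system bound on a subsurface with $|\chi|\le k$. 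Any bound of the form $e^{O(k)}$ suffices, so Przytycki's $O(|\chi|^3)$ or Greene's $O(|\chi|^2\log|\chi|)$ both work.

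Both proofs use the same topological input: a $1$-system containing $\cP$ with bounded combinatorial length. Your version avoids tulips and stem systems entirely, so it is more self-contained. What it gives up is sensitivity to the growth of $G_\cP$. The number of closed walks is exponential in $k$ however slowly $G_\cP$ grows; this is exactly the obstruction described in ``The need for stems.'' So your method cannot give the power savings of Theorem~\ref{thm:main}. Here $k=o(\log g)$, so the exponential loss is harmless, and both methods give $g^{1+o(1)}$.

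Two small fixes are needed.
\begin{enumerate}
\item The count $(2g-2)\cdot 3\cdot 2^{k-1}$ counts non-backtracking walks, but walks coming from geodesics can backtrack. An essential arc in a pair of pants can run from a cuff $\alpha$ back to $\alpha$, so $\gamma$ may cross $\alpha$ twice in succession and return to the same pants. Your bound is genuinely false for closed walks: in $K_4$ there are $183>96$ closed walks of length $6$ at each vertex. Use $(2g-2)\,3^k$ instead; the total is still $g^{1+o(1)}$.
\item The embeddedness worry you flag goes away if you take $Y$ to be the union in $S$ of the distinct pants visited, glued along all shared cuffs.
\begin{itemize}
\item $Y$ is an essential subsurface with $\chi(Y)=-\#\{\text{pants visited}\}\ge -k$.
\item Each $\gamma$ with walk $w$ has zero intersection with $\partial Y\subset\cP$, so as a geodesic it lies in $Y$.
\item $\gamma$ is non-peripheral in $Y$, since $i(\gamma,\cP)\ge 1$.
\item Intersection numbers in $Y$ agree with those in $S$.
\end{itemize}
\end{enumerate}
Separately, the factor $2f(g)$ for basepoint and orientation is unnecessary, since you can simply assign each $\gamma$ one based oriented walk, but it does no harm.
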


\subsection{The need for stems.} In order to highlight the necessity of our techniques, we quickly summarize the logic of our argument and point out where it requires a genuinely new ingredient. Each systole on $S$ corresponds to a simple cycle on the dual graph $G_{\cP}$. We can then partition systoles into two families, $\fL$ and $\fS$: 
systoles in $\fL$ have large combinatorial length -- say at least $e^{\epsilon \ell/2}$, for some $\epsilon$ -- while the complementary systoles in $\fS$ have small combinatorial length. Note however that the correspondence from systoles to cycles on the dual graph is not necessarily one-to-one. Indeed, the fiber over a cycle of combinatorial length $L$ consists of a $1$-system of curves on a subsurface of $S$ homeomorphic to $S_{1,L}$, a torus with $L$ boundary components.
Such a 1-system
can have size 
$3L$ (in fact this is the maximum size of a 1-system on $S_{1,L}$ \cite[Thm.~1.2]{MRT}).  

Implicit in Parlier's work is the statement that any systole intersects at most $\ell  e^{\ell/2}$ many other systoles. Therefore, since each curve in $\mathfrak{L}$ has to contribute at least 
$e^{\epsilon \ell/2}$ intersections 
with $\cP$, its size is at most the number of curves in $\cP$ times $\ell e^{\ell/2}$, divided by $e^{\epsilon \ell/2}$, which is on the order of the desired power-savings bound.

As for $\mathfrak{S}$, one could hope to bound the number of such curves by thinking solely about the combinatorics of $G_{\cP}$ and using the polynomial growth assumption. However it is easy to construct examples of cubic graphs with the property that in a ball of radius $r$ about a vertex $v$, there are \textit{exponentially-in-}$r$ many simple cycles through $v$ even when the volume growth is only linear. Combining this difficulty with the fact that the correspondence between systoles and cycles on the dual graph is many-to-one, we see that there is no obvious path forward. 

Instead, we use the fact that any stem system determined by curves in $\mathfrak{S}$ must necessarily land in a subsurface of $S$ with Euler characteristic at most the volume of a ball in the dual graph of radius on the order of $e^{\epsilon\ell/2}$ (see Section \ref{Sec: prelim} for definitions). The techniques developed in \cite{AougabGaster} then imply the desired upper bound on $|\mathfrak{S}|$.

\subsection{Outline.} In Section \ref{Sec: prelim} we introduce necessary terminology and results from \cite{parlier} and \cite{AougabGaster}. In Section \ref{Sec: main thm}, we prove Theorem \ref{thm:main} and Theorem \ref{thm: small comb length}. Finally, in Section \ref{Sec: construction}, we describe the cubic graphs with exponential volume growth for which Theorem \ref{thm:construction} applies. 

\subsection{Acknowledgements.} The idea for the proof of Lemma \ref{lem:GPTlemma} was obtained by an LLM which will remain unnamed (but of course we are happy to reveal the one used upon request). The rest of the paper was completely human-generated and the entirety of the paper -- including the proof of Lemma \ref{lem:GPTlemma} -- was written by humans, namely the authors. 
Miller was partially supported by NSF grant DMS--2405264/2630020.

\section{Preliminaries} \label{Sec: prelim}

\subsection{Parlier's bounds}
In \cite{parlier}, Parlier gave a significant improvement over prior kissing number bounds by showing that there is an upper bound on the order of $O(g^2/\log(g))$ for compact Riemann surfaces.
This was generalized to finite-type surfaces of any signature by Fanoni--Parlier \cite{FanoniParlier}.
We now recount Parlier's argument as it will motivate some of our techniques as well as the scales we work on.

In \cite[Theorem 1.3]{parlier}, Parlier shows a relation between the systole of a surface and the kissing number. 
More explicitly, if $S_g$ is a hyperbolic surface of genus $g$ with systole of length $\ell$, then Parlier proves that 
\begin{equation}\label{eqn:Parlier}
\K(S)\le Cg\frac{e^{\ell/2}}{\ell},
\end{equation}
for some universal constant $C$.
An asymptotic of the form $O(g^2/\log(g))$ follows immediately from the well-known bound $\ell\le 2\log(4g-2)$ for the length of the systole.
\begin{remark}
The observant reader will rightfully be concerned about the factor of $\ell$ in the denominator of this upper bound; however using only the collar lemma one can prove the kissing number is at most $3g-3$ when $\ell\le 2\sinh^{-1}(1)$.
\end{remark}
The upper bound in Equation \eqref{eqn:Parlier} is obtained using an analysis of the geometry of systoles and a packing argument.
First, \cite[Lemma 2.2]{parlier} gives an improved collar lemma for systoles which shows that disjoint systoles are separated by distance at least $\Omega(e^{-\ell/4})$.
Second, \cite[Lemma 2.4]{parlier} shows that if two systoles intersect then they must do so with angle at least $\Omega (e^{-\ell/4})$.
A standard packing argument, together with Gauss--Bonnet, shows that $S$ can be covered by $O(ge^{\ell/2})$ balls of radius comparable to $e^{-\ell/4}$.
The reason this scale of radius is chosen is because if a collection of systoles meets one such ball then they must pairwise intersect by the disjointness lemma mentioned above.
Using the lower bound on the angle of intersection together with an additional elementary estimate, Parlier shows that at most $O(e^{\ell/4})$ systoles can meet one such ball, whereas each systole meets at least $\Omega(\ell e^{\ell/4})$ balls.
Consequently an upper bound on the kissing number is given by
$$\K(S)\le C\frac{(ge^{\ell/2})(e^{\ell/4})}{\ell e^{\ell/4}}=Cg\frac{e^{\ell/2}}{\ell},$$
which yields Equation \eqref{eqn:Parlier}

\subsection{Stem systems}
We now recall the construction of a stem system from a 1-system $\Gamma$ of geodesics on a hyperbolic surface $S$ containing a pants decomposition $\cP$ \cite{AougabGaster}. Note that the preimage of $\cP$ in the universal cover $\bH$ is a collection of disjoint complete geodesics and we denote the dual tree by $\cG_{\cP}\subset \bH$.
Each geodesic $\gamma\in \Gamma- \cP$ lifts to a geodesic in $\bH$, which, when oriented, determines a trajectory of edges in $\cG_{\cP}$.

For each pants curve $\alpha\in\cP$, let $\Gamma_\alpha=\{\gamma\in\Gamma:i(\gamma,\alpha)>0\}$. 
The annular cover $A_\alpha\to S$ has core $\tilde{\alpha}\to \alpha$, and, because $\Gamma$ is a 1-sytem each of the curves $\gamma\in\Gamma_\alpha$ determines a unique lift $\tilde{\gamma}:\bR\to A_\alpha$ that intersects $\tilde{\alpha}$.

Assume for the moment that each of the pants curves is oriented.
The collection of lifts $\{\tilde{\gamma}:\gamma\in\Gamma_\alpha\}$ determine points on the component of $\partial A_\alpha$ lying to the left of $\alpha$, a copy of $S^1$. We obtain an induced cyclic order on $\Gamma_\alpha$.
In this way, each pants curve $\alpha$ determines two cyclic orders on $\Gamma_\alpha$; observe that these two cyclic orders may be quite different from each other, as the position of curves in $\Gamma_\alpha$ with respect to this cyclic order may change places on either side if they form a triangle with $\alpha$.

Now choose $\gamma\in \Gamma- \cP$, and endow $\gamma$ with an orientation. Because $\cP$ is a pants decomposition, there is some $\alpha\in\cP$ so that $\gamma\in\Gamma_\alpha$, and suppose that $\alpha$ is oriented so that $\gamma$ and $\alpha$ intersect positively.
Suppose that $\eta\in \Gamma_\alpha$ is the immediate successor of $\gamma$ in the cyclic order on $\Gamma_\alpha$ obtained with the chosen orientation of $\alpha$. 
Choose a lift $\tilde{\alpha}$ of $\alpha$ in $\bH$, and consecutive incident lifts $\tilde{\gamma}$ and $\tilde{\eta}$ of $\gamma$ and $\eta$, respectively. 
By construction, the edge trajectories of $\tilde{\gamma}$ and $\tilde{\eta}$ in $\cG_{\cP}$ intersect. 
Therefore there is a maximal shared subsequence $\tilde{c}_0,\ldots,\tilde{c}_n$ of edges in $\cG_{\cP}$ for $\tilde{\gamma}$ and $\tilde{\eta}$; we indicate the distinct edges in $\cG_{\cP}$ of the trajectories of $\tilde{\gamma}$ and $\tilde{\eta}$ that immediately follow $\tilde{\gamma}$ and $\tilde{\eta}$ as $\tilde{c}^\gamma$ and $\tilde{c}^\eta$, respectively.

The \emph{tulip} with main sides $\gamma$ and $\eta$ is given by $Q\to S$, where $Q\subset \bH$ is the convex hull of the four points $\tilde{\gamma}\cap \tilde{c}_0$, $\tilde{\eta}\cap \tilde{c}_0$, $\tilde{\gamma}\cap \tilde{c}^\gamma$, and $\tilde{\eta}\cap \tilde{c}^\eta$.
We call $\tilde{c}_0$ (or its image in $S$) the \emph{base} of the tulip.
For any point $p\in Q$, there is a geodesic from $p$ to the midpoint of the base $\tilde{c}_0$ (again, or its image in $S$); this is called a \emph{stem} at $p$.

It turns out that stems at $p$ are most convenient when $p$ is chosen from a certain subset of $Q$.
The tulip $Q$ contains a \emph{flower} $N\subset Q$, formed by the convex hull of the four points $\tilde{\gamma}\cap \tilde{c}_n$, $\tilde{\eta}\cap \tilde{c}_n$, $\tilde{\gamma}\cap \tilde{c}^\gamma$, and $\tilde{\eta}\cap \tilde{c}^\eta$. 
Finally, the flower $N$ contains an \emph{inner flower} $N^\Delta\subset N\subset Q$, formed by the complement in $N$ of the three convex hulls 
\[
\Hull(\tilde{c}_0\cup\tilde{c}^\gamma) \cup \Hull(\tilde{c}_0\cup\tilde{c}^\eta) \cup \Hull(\tilde{c}^\gamma\cup\tilde{c}^\eta)~.
\]
Observe that the inner flower could in fact be empty.
However, as the hyperbolic lengths of the curves in $\cP$ shrink, the inner flower $N^\Delta$ limits geometrically to an ideal triangle. That is, ``often, most of $Q$" is the inner flower.

Observe that a stem at $p$ of a tulip is \emph{almost} homotopic to a portion of the main sides of the tulip, which are simple closed curves on $S$ -- this is of course imprecise because of the problematic point $p$, which either doesn't move during a homotopy, in which case the stem cannot be homotopic to a main side, or does move, in which case the stem is contractible. 
Nonetheless, it should not be surprising that stems are usually simple as well:

\begin{lemma}[Prop.~2.14,\cite{AougabGaster}]
\label{lem:stems embed}
Stems from points in the the inner flower of a tulip embed in $S$.   
\end{lemma}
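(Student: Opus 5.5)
The strategy is to prove simplicity of the stem by contradiction. We work in the universal cover and show that a self-intersection of the stem would force an intersection pattern among the main sides and pants curves that a $1$-system cannot have.

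Fix a tulip $Q$ with main sides $\gamma,\eta$, base $\tilde c_0$, shared edge sequence $\tilde c_0,\ldots,\tilde c_n$ and diverging edges $\tilde c^\gamma,\tilde c^\eta$. Let $p\in N^\Delta$ and let $\sigma$ be the geodesic segment in $Q$ from $p$ to the midpoint $m$ of $\tilde c_0$. By convexity of $Q$, $\sigma$ lies in $Q$, and it crosses exactly the lifts $\tilde c_0,\ldots,\tilde c_n$ of pants curves, each once. This holds because $p$ lies beyond $\tilde c_n$: the inner flower sits inside the flower $N$, which lies on the far side of $\tilde c_n$. It also lies in the ``ideal-triangle-like'' region bounded away from $\tilde c^\gamma$ and $\tilde c^\eta$, since we removed $\Hull(\tilde c_0\cup \tilde c^\gamma)$ and the two analogous hulls.

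Suppose $\sigma$ fails to embed. Then there is a deck transformation $h\neq 1$ with $h\sigma\cap\sigma\neq\emptyset$ in the interior, transversely (tangential overlap would force $h$ to preserve the geodesic through $\sigma$, which we rule out separately using that $\sigma$ is shorter than a closed geodesic through $m$ in the direction of $\sigma$ would need to be, together with the fact that the endpoint $p$ lies in an ideal-triangle region). Now compare the edge trajectories in the dual tree $\cG_{\cP}$. The segment $\sigma$ follows the path $\tilde c_0,\ldots,\tilde c_n$ and terminates in the complementary region $R$ (a lift of a pair of pants) adjacent to $\tilde c_n$, $\tilde c^\gamma$ and $\tilde c^\eta$. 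The segment $h\sigma$ follows $h\tilde c_0,\ldots,h\tilde c_n$. The two segments cross, so these paths in the tree overlap in a subpath, and we get two cases.

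In the first case, the crossing occurs in a region strictly before the final region. Since $\sigma$ and $h\sigma$ are fellow-travelling the main sides there, $\tilde\gamma,\tilde\eta$ and $h\tilde\gamma,h\tilde\eta$ respectively, a crossing of the stems forces one of $\tilde\gamma,\tilde\eta$ to cross one of $h\tilde\gamma,h\tilde\eta$ on both sides of the crossing point, within the strip between consecutive pants lifts. Together with the crossings of the main sides with $\tilde\alpha=\tilde c_0$, this yields two distinct intersection points between a pair of systoles in $S$, or a self-intersection of $\gamma$ or $\eta$. Either outcome contradicts the simplicity of $\gamma,\eta$ or the $1$-system hypothesis. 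The same argument applies to intersections at $\tilde c_0$ itself, where we use the consecutiveness of $\tilde\gamma,\tilde\eta$ in the cyclic order on $\Gamma_\alpha$: any lift $h\tilde\gamma$ or $h\tilde\eta$ meeting $\tilde c_0$ between them would contradict that they are immediate successors.

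In the second case, the crossing occurs in the final region $R$, and here is where $p\in N^\Delta$ is used. Inside the pair of pants lift, the portion of $\sigma$ lies in the ideal-triangle-like part cut out by the three hulls. If $h\sigma$ entered this part, then $h$ would map $R$ to itself, since $p$ and $hp$ both lie in a region where only one lift of the pants is possible. It would also force $h\tilde c_n$ to be one of the three boundary geodesics of $R$, and then $h\sigma$ would cross $\sigma$ only by passing through one of the excluded hulls. That is impossible, because the hulls are exactly where geodesics from $p$ toward other boundary components would travel.

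The main obstacle is this last case: making precise why removing the three convex hulls prevents self-crossings near the endpoint $p$. I would first try to reduce it to the previous case by extending $\sigma$ past $p$ until it exits $R$ through $\tilde c^\gamma$ or $\tilde c^\eta$, and then arguing as in the first case with the extended stem.
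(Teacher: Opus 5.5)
\paragraph{Verdict.} There is a genuine gap. The paper imports this statement as Prop.~2.14 of Aougab--Gaster without proof, so your argument has to stand on its own. Your framework is reasonable: a deck transformation $h$ with $h\sigma\cap\sigma\neq\emptyset$, with the case analysis tracked in the dual tree. But the decisive step is missing.

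\paragraph{The final pants, where $p\in N^\Delta$ must be used.} The only place the hypothesis $p\in N^\Delta$ can enter is your second case, and the reasoning there is incorrect. A crossing of $h\sigma$ with $\sigma$ inside the final lift $R$ does not force $hR=R$. It only says that $h^{-1}R$ is one of the lifts $R_1,\dots,R_n,R$ traversed by $\sigma$. So the mixed configurations are never treated: $h$ carrying some intermediate $R_j$ onto $R$, or (in your first case) $h$ carrying $R$ onto some $R_i$. In either one, one of the two crossing pieces lies in a flower rather than a band. Even for $h\in\mathrm{Stab}(R)$, two of your assertions are not arguments: that $h\tilde c_n$ must be one of $\tilde c_n,\tilde c^\gamma,\tilde c^\eta$, and that the removed hulls are `where geodesics from $p$ toward other boundary components would travel'. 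No step checkably uses that $p$ avoids the three hulls.

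Your proposed repair also fails. Continued past $p$, the geodesic leaves $Q$ and need not exit $R$ through $\tilde c^\gamma$ or $\tilde c^\eta$. It may exit through some other boundary geodesic of $R$, or never leave $R$ at all if its endpoint lies in the limit set of $\mathrm{Stab}(R)$. Once outside $Q$ it is no longer trapped between main sides, so the mechanism of your first case has nothing to act on.

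\paragraph{The intermediate regions (your first case).} This case is also not a proof as written.
\begin{itemize}
\item \emph{Nested bands.} You claim crossing stems force a main side to cross a translated main side `on both sides'. This fails if $hR_j=R_i$ and both $h\tilde\gamma$ and $h\tilde\eta$ run between $\tilde\gamma$ and $\tilde\eta$ across $R_i$. The two stem pieces can then cross with no main-side crossing anywhere in $R_i$.
\item \emph{What excludes them.} You need a global argument. Follow $h\tilde\gamma$ through the region bounded by the base segment and the forward rays of $\tilde\gamma,\tilde\eta$. It cannot cross $\tilde\gamma$. So either it crosses $\tilde\eta$, or it crosses the base segment and ends in the ideal arc between the forward endpoints of $\tilde\gamma$ and $\tilde\eta$. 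Only the latter contradicts consecutiveness.
\item \emph{Misuse of consecutiveness.} Consecutiveness concerns endpoints on the boundary of the annular cover, not crossing points on $\tilde c_0$. Your claim that a translate meeting $\tilde c_0$ between $\tilde\gamma$ and $\tilde\eta$ contradicts consecutiveness is false. Such a translate may afterwards cross a main side; this is the `triangle with $\alpha$' phenomenon the paper mentions.
\item \emph{Counting intersections.} `Two distinct intersection points' still requires showing that the crossing pairs of lifts lie in distinct $\pi_1(S)$-orbits, since one crossing of $\gamma$ with $\eta$ is permitted.
\item \emph{Crossing main sides.} $\tilde\gamma$ and $\tilde\eta$ may themselves cross inside $Q$, which your band picture ignores.
\item \emph{Collinear overlap.} You dismiss this case by an unsupported length comparison. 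Nothing bounds $|\sigma|$ by the translation length of an element whose axis contains $\sigma$, so this case too must be excluded by the same combinatorics.
\end{itemize}
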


In addition to being embedded arcs from $p$ to $\cP$, it turns out that tulips from 1-systems on $S$ can be massaged so that the associated stem systems have some strong restrictions. 
Given a collection $\cN$ of tulips on $S$, a tulip $Q\to S$ from $\cN$ is \emph{maximal} provided no stem at $p$ from the family $\cN$ is a prefix of the $Q$-stem at $p$. Here ``prefix" must be interpreted up to homotopy in the following precise sense: For stems $\sigma_1,\sigma_2:(I,\partial I)\to (S,\{p\}\cup\cP)$, $\sigma_1$ is a prefix of $\sigma_2$ provided $\sigma_1$ is homotopic, as a map of pairs, to the restriction $\sigma_2|_{(J,\partial J)}$, for a subinterval $J\subset I$. 
A $(p,\cP)$-\emph{stem system} $\cS$ on $S$ is the set of stems at $p$ associated to a collection of maximal tulips based at $\cP$.

In \cite[Lem.~5.1]{AougabGaster} it is shown that it is possible to select a subset $\cN'$ from the collection $\cN$ of tulips associated to a 1-system which decomposes into a finite union $\cN'=\cN_1\sqcup\ldots \sqcup \cN_{21}$, with the property that each $\cN_i$ is a collection of maximal tulips, whose corresponding stems then form a stem system. 
This is done by first 
partitioning $\cN$ according to the \emph{refined type} of a tulip, which refers to the amount of wrapping the flower does around its base, and selecting only the tulips which are \emph{type-maximal}; see \cite[\S3]{AougabGaster} for details. 

In particular, for each $\cN_i$ we find a stem system $\cS_i$, where:
\begin{enumerate}
    \item Each stem in $\cS_i$ is an embedded arc from $p$ to $\cP$ that intersects each cuff at most once. 
    \item No two stems in $\cS_i$ are homotopic.
    \item No stem in $\cS_i$ is homotopic to a prefix of another stem in $\cS_i$.
    \item If $\sigma_1,\sigma_2\in \cS_i$ intersect, then they form a triangle with the base of one of the two stems.
\end{enumerate}

Finally, we will require the following slight generalization of the counting strategy in \cite{AougabGaster}:

\begin{Prop}
\label{prop:stem systems bound}
Suppose that $\cS$ is a $(p,\cP)$-stem system, where $p$ lies in the vertex $v$ of the dual graph $G_\cP$, and so that each of the stems intersects at most $R$ pants curves. 
Then there is an absolute constant $C>0$ so that 
\[|\cS|\le C \,\vol(B_R(v,G_\cP)).\]
\end{Prop}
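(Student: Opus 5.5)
Following the counting strategy of \cite{AougabGaster}, I would project the stems to combinatorial objects in the dual graph and count the fibers. Lift everything to $\bH$, fixing a lift $\tilde p$ in the complementary region of the lift $\tilde{\cP}$ corresponding to $v$. Each stem $\sigma\in\cS$ is an embedded arc from $p$ to its base in $\cP$ that crosses each cuff at most once and crosses at most $R$ pants curves. Its lift from $\tilde p$ therefore traverses a non-backtracking edge path in the dual tree $\cG_\cP$ of length at most $R$, ending at the lift of its base. Projecting this path to $G_\cP$ gives a walk starting at $v$ whose endpoint lies in $B_R(v,G_\cP)$. The plan is to charge each stem to the pants curve (edge of $G_\cP$) containing its base, together with some bounded data, and to show that each such edge receives only $O(1)$ stems. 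Since the number of edges in $B_R(v,G_\cP)$ is at most $3\,\vol(B_R(v,G_\cP))$ by cubicity, this gives the bound.

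The key step is the fiber bound. I would show that for a fixed pants curve $\alpha$, the stems of $\cS$ with base on $\alpha$ fall into boundedly many classes, each containing boundedly many stems. I would use the stem-system axioms (1)--(4) exactly as in \cite{AougabGaster}.

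\begin{enumerate}
\item Two stems with the same base $\alpha$ either are disjoint or, by (4), bound a triangle with $\alpha$.
\item Disjoint embedded non-homotopic arcs from $p$ to $\alpha$ are controlled by an Euler characteristic count on the surface obtained by cutting along them.
\end{enumerate}

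The original argument of \cite{AougabGaster} bounds the total number of stems linearly in $|\chi(S)|$ by cutting $S$ along a maximal disjoint subfamily and arguing that each complementary region holds boundedly many arcs. Here I would run the same argument inside the subsurface $\Sigma\subset S$ given by the union of the pants corresponding to vertices of $B_R(v,G_\cP)$, glued along the curves. Every stem lies in $\Sigma$, since it crosses at most $R$ curves starting from the pants at $v$. Moreover $|\chi(\Sigma)|$ equals the number of pants, which is $\vol(B_R(v,G_\cP))$. The stems are embedded arcs in $\Sigma$ from $p$ to curves of $\cP\cap\Sigma$. Properties (2)--(4) still hold in $\Sigma$: homotopies in $S$ between such arcs can be pushed into $\Sigma$ because $\Sigma$ is an essential subsurface bounded by curves of $\cP$, so it is $\pi_1$-injective. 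Hence the counting argument yields $|\cS|\le C|\chi(\Sigma)|+C'\le C\vol(B_R(v,G_\cP))$.

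The main obstacle is justifying that the counting lemma of \cite{AougabGaster} really only uses the Euler characteristic of the surface carrying the stems, and not the closedness of $S$ or the global pants decomposition. In particular, $\Sigma$ has boundary, and stems might end on boundary curves of $\Sigma$. I would handle this by doubling or capping: a capping argument adds at most a bounded number of arcs per boundary component. The number of boundary components is at most $3\vol(B_R)$, so this correction is still linear in the volume. Alternatively, I would note that the \cite{AougabGaster} bound is of the form $C(|\chi|+\#\text{boundary})$, which gives the same conclusion.
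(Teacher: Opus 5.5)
Your localization step is the paper's argument: every stem lies in the subsurface $\Sigma$ (the paper's $Y$) spanned by the pants in $B_R(v,G_\cP)$, $|\chi(\Sigma)|=\vol(B_R(v,G_\cP))$, and the Aougab--Gaster count is rerun inside $\Sigma$. Two parts of your write-up need to change, though.

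\textbf{The fiber bound.} The per-edge fiber bound, which you call the key step, is neither established nor needed. Your items (1)--(2) do not give $O(1)$ stems per base curve. An Euler characteristic count of pairwise disjoint, non-homotopic arcs from $p$ to a fixed $\alpha$ bounds them only by $O(|\chi(\Sigma)|)$. You give no other mechanism, and summing over the edges of the ball then yields only $O(\vol(B_R)^2)$. Drop the charging scheme; the global count in $\Sigma$ does all the work.

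\textbf{The counting argument.} The count in \cite{AougabGaster} is not a cut-along-a-maximal-disjoint-subfamily argument. Such an argument would not account for the crossing stems allowed by property (4). It is an area argument:
\begin{enumerate}
\item Lift to $\tilde p$. The no-prefix property makes the lifts of $\cP$ containing the stem endpoints non-nested as seen from $\tilde p$, so they are cyclically ordered.
\item Consecutive endpoints then span a convex polygon whose area is proportional to $|\cS|$.
\item One shows that a definite fraction of that area embeds under $\bH\to S$.
\end{enumerate}

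With this argument your ``main obstacle'' (the boundary of $\Sigma$) disappears without doubling or capping. The component $\tilde\Sigma$ of the preimage of $\Sigma$ containing $\tilde p$ is a union of lifted pants over a subtree of the dual tree. It is therefore the intersection of the half-planes bounded by its frontier lifts, hence convex. Each lifted stem crosses at most $R$ lifts of $\cP$, so every polygon vertex lies in $\tilde\Sigma$, and by convexity so does the whole polygon. The embedded portion therefore has area at most $\mathrm{area}(\Sigma)=2\pi\vol(B_R(v,G_\cP))$. This is exactly the paper's proof.

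Doubling $\Sigma$ could serve as a black-box alternative, but you would have to check two things. First, using the folding retraction, that (2)--(4) persist in the double. Second, that the \cite{AougabGaster} bound applies to abstract collections satisfying (1)--(4), not just to stems of maximal tulips. Capping is not safe, since it can merge homotopy classes of distinct stems. A bound of the form $C(|\chi|+\#\text{boundary})$ is not something you can cite.
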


\begin{proof} In \cite[Prop.~4.2]{AougabGaster}, the first two authors establish a bound on the size of a stem system that is linear in the Euler characteristic of the entire surface. The proof begins by considering a lift $\tilde{p}$ of $p$ to the universal cover and corresponding lifts of each stem. Because no stem is a prefix of any other, the lifts of curves in $\mathcal{P}$ containing the endpoints of the stems cannot nest from the perspective of $\tilde{p}$. For this reason, these lifts are naturally cyclically ordered around $\tilde{p}$ and we can connect successive endpoints of stems by geodesics to form a convex polygon. The proof works by showing that the lion's share of the area of that polygon embeds under the covering projection; since its area is a function of the number of stems, we are done. 

To obtain Proposition \ref{prop:stem systems bound}, one only needs to observe that if each stem intersects at most $R$ curves in $\mathcal{P}$, the entire stem system is contained in a subsurface $Y$ around $p$ consisting of those points which can be reached from $p$ by a path that crosses at most $R$ pants curves. The Euler characteristic of $Y$ is proportional to $\vol \; B_R(v, G_\mathcal{P})$ and thus so is its area. One then simply carries out the argument summarized in the first paragraph but restricted to $Y$: the area of the polygon in the universal cover is proportional to the number of stems, but now a majority of this area will embed into $Y$. \qedhere

\end{proof}

Finally, we can observe that Proposition \ref{prop:stem systems bound} and its proof does not at all rely on the assumption that $\mathcal{P}$ is a pants decomposition. We can instead replace it with a multi-curve at the cost of losing some of the control between the volume of a ball in the dual graph and the Euler characteristic of the corresponding subsurface (since now, a single vertex of the dual graph might correspond to a subsurface that is much larger than a single pair of pants). In particular, one gets a generalization of Proposition \ref{prop:stem systems bound} in which the volume of a ball in the dual graph is replaced by the size of the subsurface it represents: 

\begin{Prop}
\label{prop:better bound}
Suppose that $\cC$ is a multicurve and $\cS$ is a $(p,\cC)$-stem system, where $p$ lies in the vertex $v$ of the dual graph $G_\cC$, and so that each of the stems intersects at most $R$ curves in $\cC$.
Then there is an absolute constant $C>0$ so that
\[
|\cS|\le C\,  \xi(B_R(v,G_\cC))~,
\]
where $\xi (H)$ indicates the absolute value of the Euler characteristic of the subsurface of $S$ determined by the subgraph $H\subseteq G_\cC$.
\end{Prop}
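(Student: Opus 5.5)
The plan is to rerun the proof of Proposition~\ref{prop:stem systems bound} (i.e.\ of \cite[Prop.~4.2]{AougabGaster}), replacing the pants decomposition $\cP$ by the multicurve $\cC$ and the pair of pants containing $p$ by the complementary component $Y_v$ of $\cC$ corresponding to the vertex $v$. First, I will localize. Let $Y\subseteq S$ be the subsurface which is the union of the closures of the complementary components of $\cC$ corresponding to the vertices of $B_R(v,G_\cC)$, that is, of those components reachable from $p$ by a path crossing at most $R$ curves of $\cC$. Since each stem in $\cS$ is an arc from $p$ that crosses at most $R$ curves of $\cC$, every stem lies in $Y$. Moreover, by definition $\xi(B_R(v,G_\cC))=|\chi(Y)|$, and by Gauss--Bonnet this equals $\mathrm{area}(Y)/2\pi$. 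If $Y$ contains annular components or is glued along itself, its Euler characteristic is still additive over the complementary pieces, so no loss occurs.

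Next, I will lift to the universal cover and build the polygon. Fix a lift $\tilde p$ and lift each stem to an arc from $\tilde p$ ending on a lift of a curve of $\cC$. By properties (1)--(3) of stem systems, no stem is homotopic to a prefix of another. Hence the terminal lifts $\tilde c_\sigma$ of the base curves do not nest as seen from $\tilde p$: if one lift separated $\tilde p$ from another, the shorter stem would be a prefix of the longer. So these lifts are cyclically ordered around $\tilde p$. Joining consecutive endpoints by geodesics gives a convex polygon $\Pi$ with $|\cS|$ vertices. Its area grows linearly in $|\cS|$, for instance by triangulating from $\tilde p$ and observing that each consecutive pair of non-nested, disjoint boundary lifts contributes a definite amount of area, as in \cite{AougabGaster}. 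None of this uses the fact that the complementary regions are pants; it uses only that $\cC$ lifts to a family of disjoint complete geodesics.

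The remaining and main step is to show that a definite fraction of the area of $\Pi$ embeds under the covering map $\bH\to S$, and that its image lies in $Y$. Containment in $Y$ should be immediate, since $\Pi$ is contained in the union of the lifts of the pieces traversed by the stems. The embedding is the delicate part. In \cite{AougabGaster} it is deduced from property (4) (intersecting stems form a triangle with the base of one of them), together with the embeddedness of stems (Lemma~\ref{lem:stems embed}) and the fact that each stem crosses each cuff at most once. I will go through that argument and check that the only place pants are used is the bounded complexity of $Y_v$. There, I expect the pants-specific counting to be replaced by the elementary fact that a subsurface $Y'$ of $S$ supports at most a constant times $|\chi(Y')|$ pairwise non-homotopic disjoint arcs with endpoints on $\partial Y'\cup\{p\}$. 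Once a constant fraction $c$ of $\mathrm{area}(\Pi)$ injects into $Y$, we obtain
\[
c'\,|\cS|\le c\,\mathrm{area}(\Pi)\le \mathrm{area}(Y)=2\pi\,\xi(B_R(v,G_\cC)),
\]
which gives the claim with an absolute constant $C$.
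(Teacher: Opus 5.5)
Your proposal is correct and follows the same route as the paper. You localize to the subsurface $Y$ determined by $B_R(v,G_\cC)$, lift to $\bH$, use the no-prefix property to get non-nested, cyclically ordered terminal lifts and hence a convex polygon, and invoke the area-embedding argument of \cite{AougabGaster} with $|\chi(Y)|=\xi(B_R(v,G_\cC))$ in place of the ball volume; the paper likewise asserts that this argument never uses that the complementary regions are pants, so your speculative arc-counting replacement is not needed. One point to make explicit is that the linear lower bound on $\mathrm{area}(\Pi)$ needs a metric in which the curves of $\cC$ are very short, so that distinct lifts are far apart; this is harmless because the statement is topological, and it is the same normalization the paper uses in its main argument.
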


\section{Proof of Theorems \ref{thm:main}, \ref{thm:generalmain}, and \ref{thm: small comb length}} \label{Sec: main thm}

In this section, we'll prove Theorem \ref{thm:main}.
For this, let $S$ be a closed hyperbolic surface of genus $g$ admitting a systolic pants decomposition $\cP$, let $G_\cP$ be its dual graph, let $\Gamma$ be its set of systoles, and let $\ell$ be its systolic length.
We may assume throughout that $\ell\ge 2\operatorname{arcsinh}(1)$, since otherwise 
the Collar Lemma implies that all systoles are disjoint and hence $|\Gamma|$ is $O(g)$.

As outlined in the introduction, we partition $\Gamma$ into two subsets $\mathfrak{L}$ and $\mathfrak{S}$ based on combinatorial length.
To this end, let $R>0$ be a placeholder for the threshold.
Define $\mathfrak{L} \subset \Gamma$ as the subset of all systoles with combinatorial length at least $R$ and define $\mathfrak{S}$ as its complement. 
We show that $|\mathfrak{L}|$ is $O(g\ell \frac{e^{\ell/2}}{R})$ and $|\mathfrak{S}|$ is $O(gV_R)$, where $V_R=\max_{v\in G_\cP} \vol(B_R(v,G_\cP))$.
Consequently, we will see that
$$|\Gamma|\le C g\left(\ell \frac{e^{\ell/2}}{R}+V_R\right).$$
In particular, when 
$R=(\ell e^{\ell/2})^{1/(d+1)}$ and $G_\cP$ has polynomial growth of degree $d$ at threshold $R$, then $V_R\le AR^d$ for some universal $A>0$ and we conclude that
\begin{align*}
|\Gamma|&\le C g\left(\ell \frac{e^{\ell/2}}{R}+V_R\right) \\
&\le D g \left(\ell e^{\ell/2}\right)^{d/(d+1)}.
\end{align*}
This will complete the proof of Theorem \ref{thm:main} upon proving these two estimates.

We first bound $|\mathfrak{L}|$.
Cover each $\alpha \in \cP$ by $O(\ell  e^{\ell/4})$ balls of radius $\Theta(e^{-\ell/4})$. Using Parlier's systolic collar lemma, in particular the proof of \cite[Theorem 2.9]{parlier}, one sees that $O(e^{\ell/4})$ elements of $\Gamma$ can cross $\alpha$ in a given ball. Therefore, 
\[ i(\alpha, \Gamma) = O(\ell  e^{\ell/2}),\]
for all $\alpha \in \cP$.
Since a pants decomposition has $\Theta(g)$ curves, we conclude that
\[ i(\cP, \Gamma) = O(g  \ell  e^{\ell/2}).\]
By definition of combinatorial length, every element of $ \mathfrak{L}$ contributes at least $R$ to this sum and therefore
$$R|\mathfrak{L}|\le C_1g\ell e^{\ell/2}.$$
This shows that
$$|\mathfrak{L}|\le C_1g\ell \frac{e^{\ell/2}}{R},$$
completing the first estimate.

Finally, we prove the bound on $|\mathfrak{S}|$.
We stress that, 
for the remainder of the argument, we will not use any geometric properties of the geodesics in $\mathfrak{S}$; we will only require the topological facts that $\mathfrak{S}$ is a $1$-system, that $\cP\subset\mathfrak{S}$ is a maximal multicurve, and that each curve in $\mathfrak{S}$ intersects $\cP$ fewer than $R$ times.
For this reason, we are fully permitted to change the metric on $S$ to a new one with respect to which the curves in $\mathfrak{S}$ need not be systoles. 

In this vein, we pick a metric on $S$ for which the curves in $\cP$ are extremely short and replace all curves in $\mathfrak{S}$ by their geodesic representatives.
We then consider the set of equivalence classes of tulips determined by $\cP$ and the elements of $\mathfrak{S}$.
Each such tulip corresponds to a pair of elements in $\mathfrak{S}$ that are adjacent in the circular order at infinity in an annular cover corresponding to an element of $\cP$.
As shown in \cite{AougabGaster}, each element of $\mathfrak{S}\setminus\cP$ is a main side of a tulip that is \textit{maximal amongst its type}.
Moreover, for each fixed type, the set of such tulips can be bounded above by considering the map
\[
\bigsqcup_{N\in\mathcal{N}}N^\Delta
\longrightarrow
S,
\]
from the disjoint union of their inner flowers $N^\Delta$ to the surface.

Indeed, \cite{AougabGaster} show that this immersion is at most $C_2\sigma(\mathfrak{S})$-to-$1$, for some universal constant $C_2$, where $\sigma(\mathfrak{S})$ is an upper bound for the size of any stem system associated to this collection of tulips.
As the lengths of the curves in $\cP$ tend to zero, the area of the inner flower of each tulip tends to $\pi$.
An upper bound on the number of tulips therefore follows from the Gauss--Bonnet theorem.
Summing over the finitely many possible types gives
\[
|\mathfrak{S}|
\le
|\cP|
+
C_3|\chi(S)|\sigma(\mathfrak{S})
\]
for some universal constant $C_3$.

Thus, the desired bound on $|\mathfrak{S}|$ reduces to finding a similar bound on $\sigma(\mathfrak{S})$.
Since the curves in $\mathfrak{S}$ have combinatorial length less than $R$ and since the combinatorial length of a stem is always at most that of either main side of the corresponding tulip, each stem in a stem system has combinatorial length at most $R$ as well. 

By Proposition~\ref{prop:stem systems bound}, there is another universal constant $C_{4}$ so that 
\[
\sigma(\mathfrak{S}) \leq C_{4} \displaystyle \max_{v\in G_\cP}\vol (B_R(v,G_\cP)).
\]
By assumption, $G_\cP$ has polynomial volume growth of degree $d$, so finally
\[ |\mathfrak{S}| \leq C_5gV_R=C_5g\displaystyle \max_{v\in G_\cP}\vol (B_R(v,G_\cP)).  \]
This completes the proof of the second estimate.

Note that the same argument used to bound $|\mathfrak{S}|$ readily implies Theorem \ref{thm: small comb length}. Indeed, if every curve in this combinatorial collection has combinatorial length $o(\log(g))$, then since $G_\cP$ is cubic
$$V_R=\displaystyle \max_{v\in G_\cP}\vol (B_R(v,G_\cP))\le g^{o(1)}.$$
It therefore follows that the size of the corresponding collection is $o(g^{1+\epsilon})$ for every $\epsilon>0$.

It remains to address Theorem \ref{thm:generalmain} and prove the bound in Equation \ref{eq:gen thm}.
Suppose that $\cP$ is a maximal systolic multicurve and that every component of $S\setminus\cP$ has Euler characteristic of absolute value at most $T$.
The argument bounding $\mathfrak{L}$ is entirely unchanged, since it uses only that $\cP$ consists of $O(g)$ systoles and Parlier's local estimate.
To bound $\mathfrak{S}$, simply note that each vertex of the dual graph now corresponds to a subsurface whose Euler characteristic has absolute value at most $T$ (as opposed to $1$ for a pair of pants).
Therefore, by Proposition \ref{prop:better bound},
\[
\sigma(\mathfrak{S})
\le
C_4~ T\max_{v\in V(G_\cP)}\vol( B_R(v,G_\cP)),
\]
and consequently
\[ |\mathfrak{S}| \leq C_5gTV_R.  \]
In particular, we see that 
\[
|\Gamma|
\le
C g\left(
\ell\frac{e^{\ell/2}}{R}
+
TV_R
\right),
\]
and the desired bound now follows by taking 
$R=(\ell e^{\ell/2}/T)^{1/d+1}$ as before.

\section{Examples and Theorem \ref{thm:construction}} \label{Sec: construction}

The idea behind Theorem \ref{thm:construction} is simple and relies on the basic observation that the stem system-based bounds used in the proof of Theorem \ref{thm:main} did not require that the dual graph to $\cP$ had polynomial volume growth \textit{globally}. Indeed, the systolic collar lemma of \cite{parlier} implies that each curve in $\Gamma$ intersects $\cP$ at most $O(\ell  e^{\ell/4})$ times. Thus, the essence of Theorem \ref{thm:construction} is to build cubic graphs that have \textit{local} polynomial growth in a radius $\ell  e^{\ell/4}$ neighborhood of any vertex, but that nevertheless have exponential growth globally.

As a schematic, one should keep in mind a tree with many bivalent vertices, created by starting with a trivalent tree and then replacing each edge with a subgraph of polynomial growth. The resulting graph will of course not be $3$-regular, but as we outline below, it is easy to build $3$-regular graphs that mimic the structure of such a tree. 

Concretely, start with $\approx \log((2g-2)/(\ell  e^{\ell/4})^{2})$ layers of a trivalent tree $T$. Replace each end with a triangle as in the following figure: 

\begin{figure}[h]
\begin{center}
\includegraphics[height=4cm]{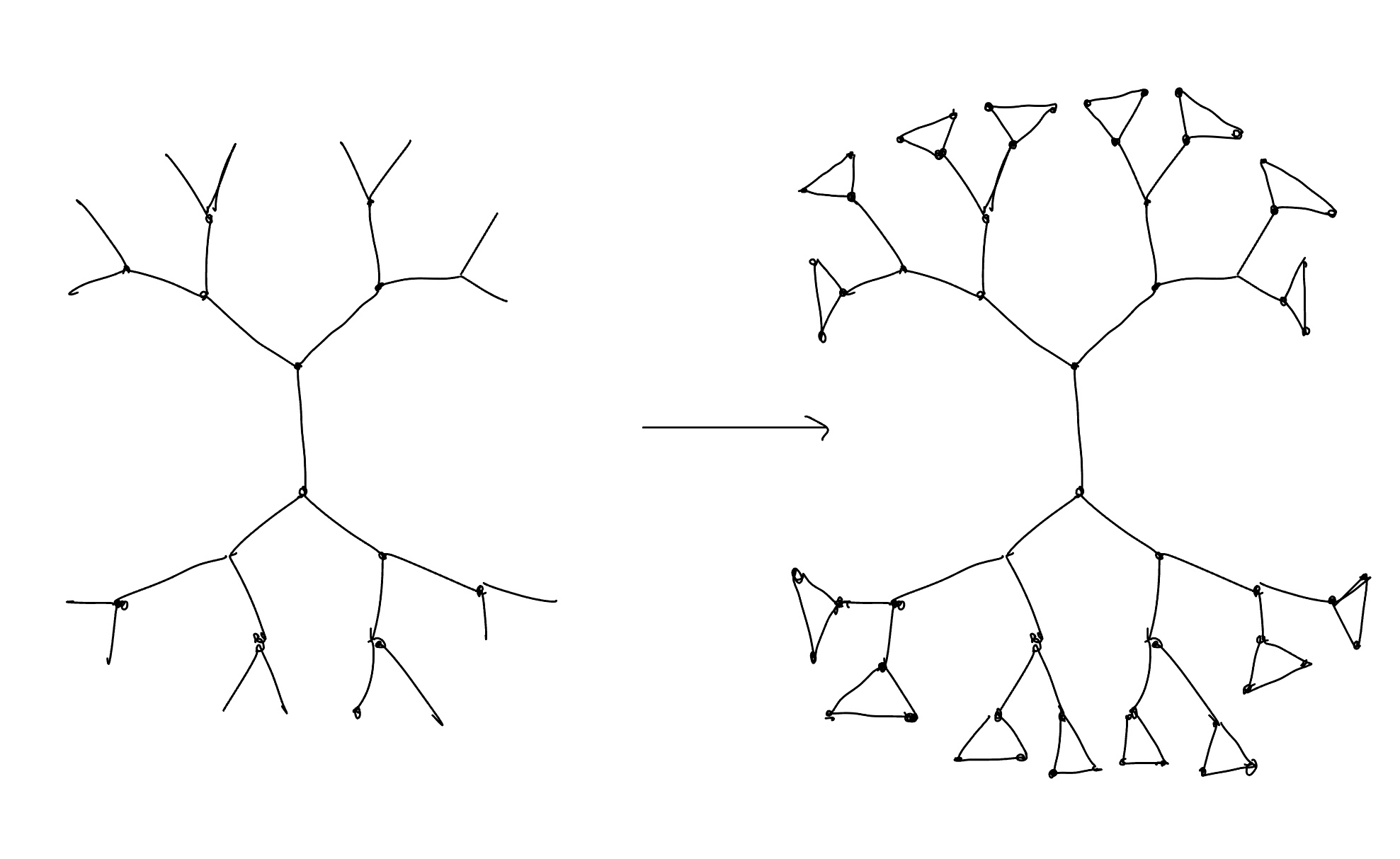}
\end{center}
\end{figure}

Next, choose a matching of the ends and glue together the bases of the  corresponding triangles.
We will refer to the edges of $T$ as the `internal edges' of the resulting graph.

\begin{figure} [h]
\begin{center}
\includegraphics[height=4cm]{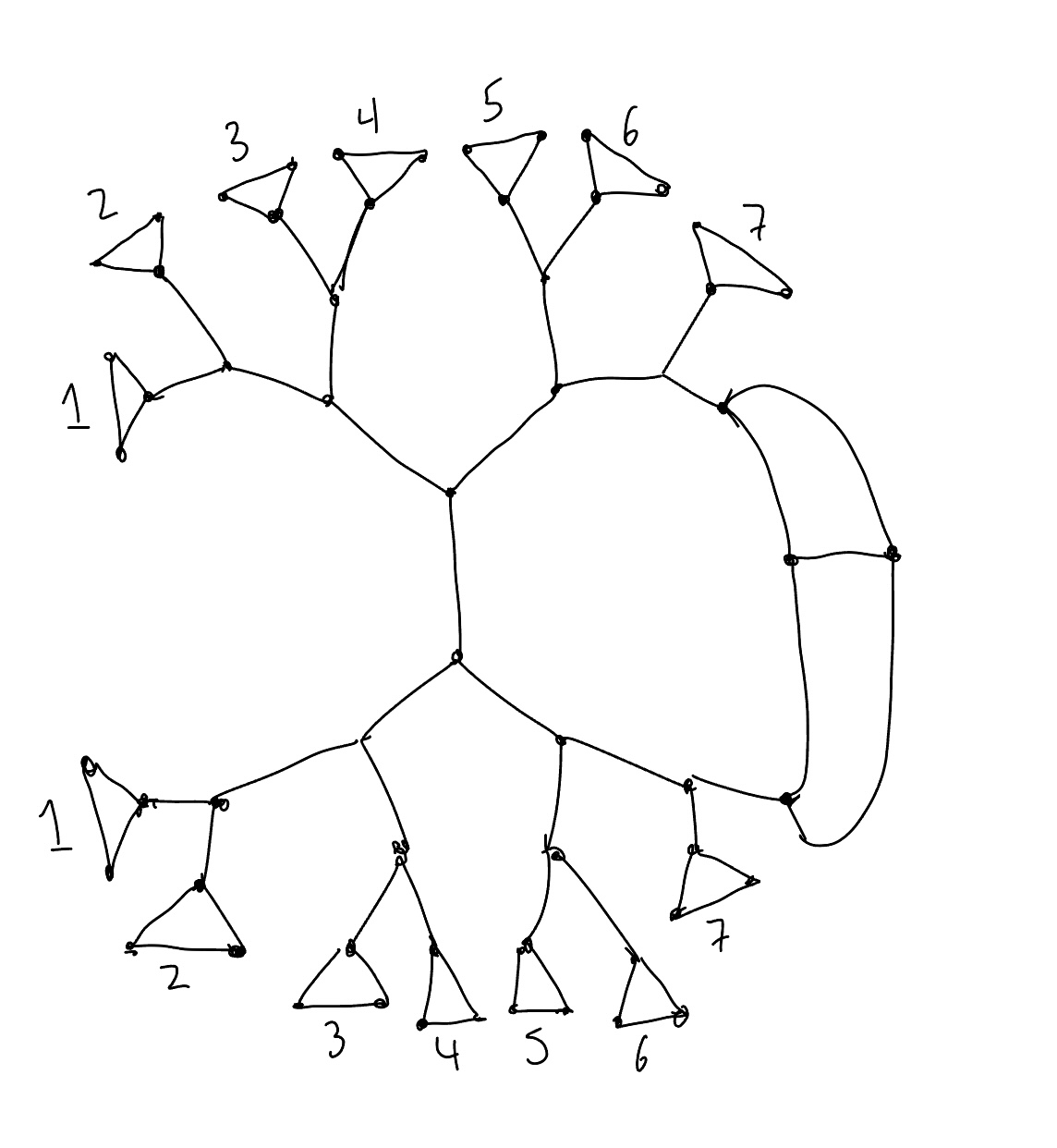}
\end{center}
\caption{Choose a perfect matching of the bases of each end-triangle as shown above. Glue corresponding bases together.}
\end{figure}

\vspace{50 mm}

We explain a last cut and paste operation. For each internal edge $e$ of $T$, we will replace $e$ with a graph $\mathcal{E}$,  of $\approx (\ell e^{\ell/4})^{2}$ edges, with two chosen leaves $\mathcal{E}^{\pm}$ in correspondence with the two endpoints of $e$. 
Our construction has the property that the endpoints $\mathcal{E}^\pm$
have metric neighborhoods of quadratic growth in $\mathcal{E}$. For instance, we can choose $\mathcal{E}$ to be a sequential union of ``ladders'' with increasing heights as shown in Figure \ref{fig:ladders}. The picture should be interpreted as being symmetric from the point of view of $\mathcal{E}^\pm$ (the two black vertices pictured near the bottom and the top right). 
So in particular, the heights of the ladders reach a maximum halfway through and then decline again towards the top right. 

\begin{figure} [h]
\begin{center}
\includegraphics[height=4cm]{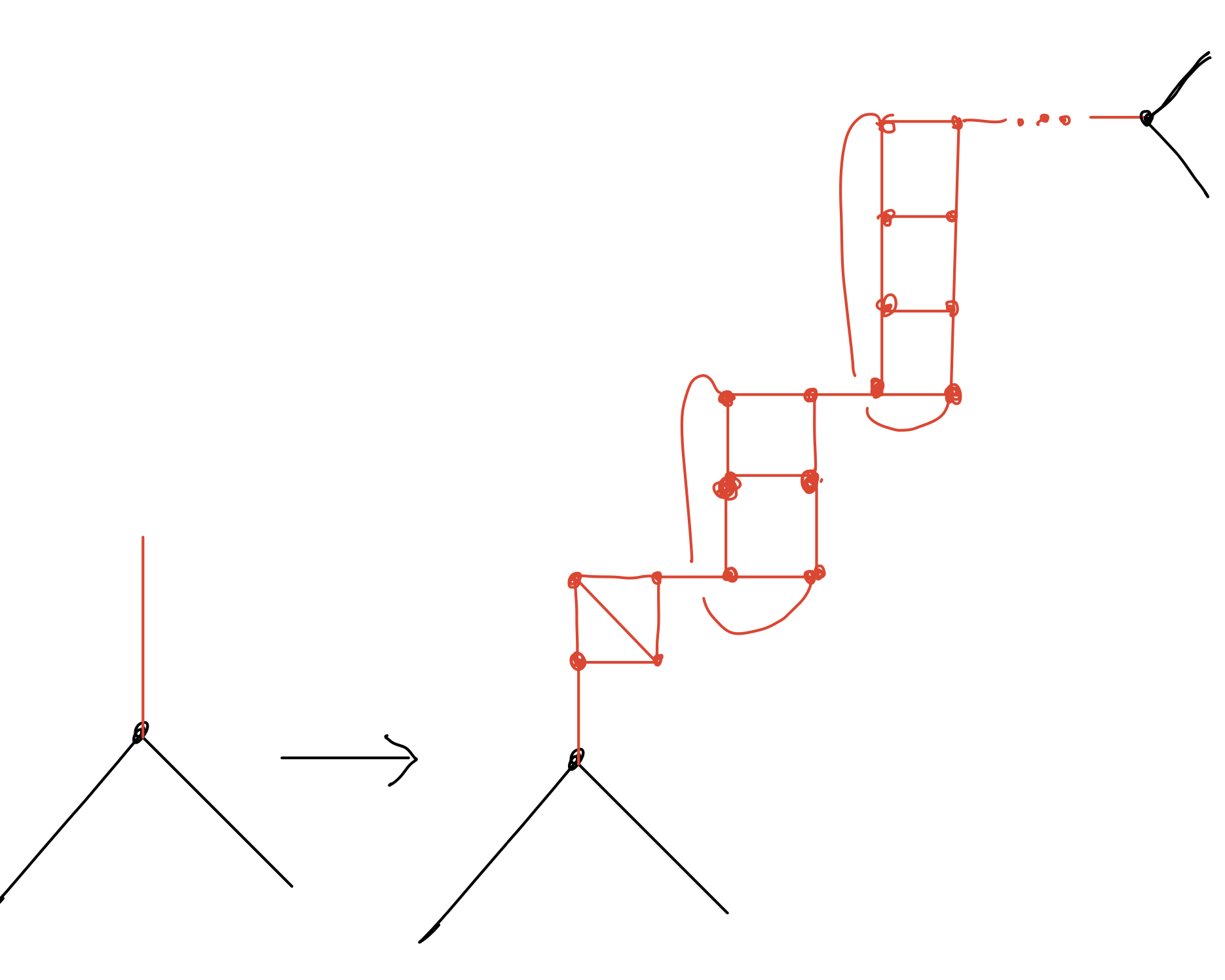}
\end{center}
\caption{}
\label{fig:ladders}
\end{figure}

Viewing the black vertex $v$ as 
one of the two leaves $\mathcal{E}^\pm$, note that every vertex in the union of the first $T$ ladders is accessible from $v$ by a path of length at most $3T/2$, and therefore $\mathcal{E}$ has quadratic growth. 

The resulting graph $\mathcal{G}$ is cubic and has $2g-2$ vertices. By construction, a ball $B$ of radius of $\ell  e^{\ell/4}$ in $\mathcal{G}$ has volume at most on the order of $(\ell  e^{\ell/4})^{2}$. Indeed, if $B$ is centered at an original and non-univalent vertex $v$ of $T$, $v$ must be the only original vertex within $B$. Since $\mathcal{E}$ has quadratic volume growth, the bound follows. 

If $B$ is centered at an end of $T$, a similar argument holds. Finally, if $B$ is centered at a new vertex in the middle of a ladder, the furthest it could possibly reach is into the middle of the $4$ adjacent copies of $\mathcal{E}$, and so at worst the leading coefficient of the volume polynomial increases. 

By choosing different matchings of the ends and by permuting the heights of ladders within each copy of $\mathcal{E}$, one obtains super-exponentially many isomorphism classes of cubic graphs that have quadratic growth up to the appropriate threshold. Finally, we note that by replacing $\mathcal{E}$ with a sequence of ladders whose heights grow quadratically (or more generally, polynomially of degree $d$) instead of linearly, we get cubic graphs that have \textit{cubic} (or of degree $d+1$) volume growth at the appropriate threshold, but which have exponential volume growth globally. 

\section{The necessity of a threshold}\label{sub:thresh}

As mentioned in the introduction, the use of a threshold is critical for certain non-trivial applications of Theorem \ref{thm:main}.
That is to say, for some values of $d$, removing the condition of a threshold forces all such examples to be covered by techniques from \cite{parlier}.

To make this precise, we say that a family $(G_n)_{n=1}^{\infty}$ has \emph{polynomial volume growth of degree $d$ at all scales} provided there exists a monomial $f$ of degree $d$ such that, for every $n$ and every vertex $v\in G_n$, we have $\vol(B_R(v,G_n))\le f(R)$ for every $R>0$.
In particular, such a family has subquadratic growth at all scales whenever $d<2$. 
We now show the following.

\begin{lemma}\label{lem:GPTlemma}
Let $\cS$ be a family of hyperbolic surfaces such that each $S\in\cS$, with systolic length $\ell$, admits a systolic pants decomposition whose dual cubic graph has subquadratic volume growth at all scales, then $\K(S)$ is $O(g)$.
\end{lemma}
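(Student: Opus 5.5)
The plan is to show that subquadratic growth at all scales forces the systolic length $\ell$ to be bounded by a universal constant $L_0$ depending only on the growth monomial $f(R)=cR^d$ with $d<2$. Once $\ell\le L_0$, Parlier's bound \eqref{eqn:Parlier} gives $\K(S)\le Cg\,e^{L_0/2}/\ell$. The small-$\ell$ regime is handled by the collar remark in Section~\ref{Sec: prelim}: if $\ell\le 2\operatorname{arcsinh}(1)$ then $\K(S)\le 3g-3$. Together these give $\K(S)=O(g)$. So everything reduces to an area-versus-volume comparison in which the surface area seen from a point is controlled by the dual graph.

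\textbf{Step 1 (a large embedded disk).} Fix a point $p$ in a pair of pants corresponding to a vertex $v$ of $G_\cP$. Since $\mathrm{sys}(S)=\ell$, the injectivity radius at $p$ is at least $\ell/2$. Hence the metric ball $D$ of radius $\ell/2$ about $p$ is an embedded disk of area $2\pi(\cosh(\ell/2)-1)\gtrsim e^{\ell/2}$.

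\textbf{Step 2 (confining $D$ to a graph ball).} Every point of $D$ is joined to $p$ by a path of length at most $\ell/2$. I claim such a path crosses at most $K=O(\ell e^{\ell/4})$ curves of $\cP$. The input is Parlier's systolic collar lemma \cite[Lemma 2.2]{parlier}: disjoint systoles are at distance $\Omega(e^{-\ell/4})$. Consider two consecutive crossings of $\cP$ along the path. If they are on distinct pants curves, the segment between them has length at least $\Omega(e^{-\ell/4})$. If they are on the same curve $\alpha$, the segment is an arc leaving and re-entering a pair of pants through the same cuff; after homotoping, it either can be shortened or has length bounded below by the width of the embedded collar about $\alpha$, which is again $\Omega(e^{-\ell/4})$ for a systole. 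Thus the number of crossings is at most $(\ell/2)/\Omega(e^{-\ell/4})=O(\ell e^{\ell/4})$. Consequently $D$ lies in the subsurface $Y$ made of the pants indexed by $B_K(v,G_\cP)$.

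\textbf{Step 3 (comparing areas).} Each pair of pants has area $2\pi$, so
\[
e^{\ell/2}\lesssim \mathrm{area}(D)\le \mathrm{area}(Y)\le 2\pi\,\vol(B_K(v,G_\cP))\le 2\pi c\,K^d\lesssim (\ell e^{\ell/4})^d .
\]
This rearranges to $e^{\ell(1/2-d/4)}\lesssim \ell^d$. Because $d<2$, the exponent $1/2-d/4$ is positive, which forces $\ell\le L_0(c,d)$, and the lemma follows as described above. This is exactly where growth at \emph{all} scales is needed: the radius $K\approx \ell e^{\ell/4}$ at which we need the volume bound is not fixed in advance.

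The main obstacle is Step 2: making rigorous that a path of length $\ell/2$ crosses only $O(\ell e^{\ell/4})$ pants curves. The distinct-curve case follows directly from Parlier's separation estimate. The repeated-crossing-of-the-same-cuff case needs the collar-width argument, and any leftover backtracking can be absorbed by measuring distance in $G_\cP$ only, since backtracking does not increase graph distance. Even a weaker bound $K=O(e^{(1/4+\delta)\ell})$ with $\delta$ small relative to $2-d$ would suffice for Step 3, so there is some slack in this estimate.
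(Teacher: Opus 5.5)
Your proof is correct and follows the paper's argument: you embed the radius-$\ell/2$ disk, confine it to the union of pants indexed by a dual-graph ball of radius $\asymp \ell e^{\ell/4}$, compare $e^{\ell/2}$ with $(\ell e^{\ell/4})^d$ for $d<2$ to bound $\ell$, and finish with Parlier's bound together with the small-$\ell$ collar case. The only difference is in how the per-pants length bound is obtained. The paper reads it off directly from the seam length $s(\ell)\asymp e^{-\ell/4}$ of the equilateral pants via the hexagon identity. You instead invoke Parlier's separation of disjoint systoles, which gives the same bound $s(\ell)$, and you treat same-cuff returns separately, a case the paper dismisses as straightforward.
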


\begin{proof}
For a given surface $S\in \cS$, consider the systolic pants decomposition $\cP$.
Each pair of pants has three cuffs of length $\ell$ and all pairs of pants have seams of the same length $s(\ell)$.
If $x,y\in S$ and $v_x$, $v_y$ are the two vertices in the dual graph $G_\cP$ corresponding to the pairs of pants that $x$ and $y$ lie in (if they lie on a cuff, then choose one of the two vertices arbitrarily), then one has the straightforward inequality that
$$(d_{G_\cP}(v_x,v_y)-1)s(\ell)\le d_S(x,y).$$
In particular, if  $d_S(x,y)\le \ell/2$
then 
$$d_{G_\cP}(v_x,v_y)\le 1+\frac{d_S(x,y)}{s(\ell)}\le 1+\frac{\ell}{2s(\ell)},$$
and consequently for any $x\in S$, if $y$ is in the ball of radius $\ell/2$ centered at $x$ on $S$, then $v_y\in B_{R(\ell)}(v_x,G_\cP)$ where $R(\ell)=1+\frac{\ell}{2s(\ell)}$.
Moreover, as $\ell$ is the systolic length, the ball of radius $\ell/2$ about any point is embedded in $S$.
Therefore by the subquadratic growth condition at all scales and the fact that the volume of a pair of pants is $2\pi$, we conclude that
$$\vol_S(B_{\ell/2}(x))= 2\pi(\cosh(\ell/2)-1)\le  2\pi \vol(B_{R(\ell)}(v_x,G_\cP)) \ll   2\pi A R(\ell)^2 ,$$
where the first term is the volume of the $\ell/2$ ball around $x$ on $S$.
Asymptotically, $\cosh(\ell/2)$ is on the order of $e^{\ell/2}$ and, by the usual hyperbolic hexagon identity, $R(\ell)$ is on the order of $\ell e^{\ell/4}$. In particular, $\ell$ must be universally bounded by some $L_0$ for all $S$.
However, then Parlier's bound in Equation \eqref{eqn:Parlier} immediately implies that $\K(S)$ is $O(g)$, provided that $\ell> 2\sinh^{-1}(1)$.
When this does not hold, systoles are pairwise disjoint and hence $\K(S)\le 3g-3$.
\end{proof}

\begin{remark}
A careful reading of the proof of Lemma \ref{lem:GPTlemma} shows that the same proof will work if one only assumes subquadratic growth at the threshold of $R(\ell)=\ell e^{\ell/4}$.
Therefore it is worth mentioning that for all $d > 1$, the threshold used in the statement of Theorem \ref{thm:main} is strictly asymptotically smaller than $R(\ell)$. In particular, for each fixed $d>1$, the growth hypothesis of Theorem \ref{thm:main} is strictly weaker than what is assumed in Lemma \ref{lem:GPTlemma}. 
\end{remark}

\bibliographystyle{abbrv}
\bibliography{kissing}
\end{document}